\documentclass[12pt]{amsart}
\usepackage{latexsym,amssymb,amscd}
\usepackage{graphicx}
\def\B'c{{\mathcal{B'}}}
\def\U'c{{\mathcal{U'}}}

\def\opn#1#2{\def#1{\operatorname{#2}}} 
\opn\chara{char}
\opn\length{\ell}
\opn\cd{cd}
\opn\projdim{pd}
\opn\injdim{inj\,dim}
\opn\ini{in}
\opn\rank{rank}
\opn\depth{depth}
\opn\height{ht}
\opn\bigheight{bight}
\opn\embdim{emb\,dim}
\opn\codim{codim}

\opn\Tr{Tr}
\opn\bigrank{big\,rank}
\opn\superheight{superheight}\opn\lcm{lcm}
\opn\trdeg{tr\,deg}%
\opn\reg{reg}
\opn\lreg{lreg}
\opn\set{set}
\opn\supp{Supp}
\opn\bight{bight}
\opn\shad{Shad}
\opn\indeg{indeg}
\opn\lex{lex}
\opn\div{div}
\opn\Div{Div}
\opn\cl{cl}
\opn\Cl{Cl}
\opn\Spec{Spec}
\opn\Supp{Supp}
\opn\supp{supp}
\opn\Sing{Sing}
\opn\Ass{Ass}
\opn\Ann{Ann}
\opn\Rad{Rad}
\opn\Soc{Soc}
\opn\Ker{Ker}
\opn\Coker{Coker}
\opn\Im{Im}
\opn\Hom{Hom}
\opn\Tor{Tor}
\opn\Ext{Ext}
\opn\End{End}
\opn\Aut{Aut}
\opn\id{id}

\opn\nat{nat}
\opn\GL{GL}
\opn\SL{SL}
\opn\mod{mod}
\opn\ord{ord}
\opn\ara{ara}
\opn\aff{aff}
\opn\con{conv}
\opn\relint{relint}
\opn\st{st}
\opn\lk{lk}
\opn\cn{cn}
\opn\core{core}
\opn\vol{vol}
\opn\gr{gr}

\def\pot#1#2{#1[\kern-0.28ex[#2]\kern-0.28ex]}

\opn\dirlim{\underrightarrow{\lim}}
\opn\invlim{\underleftarrow{\lim}}
\def\pnt{{\raise0.5mm\hbox{\large\bf.}}}

\def\Implies{\ifmmode\Longrightarrow \else
	\unskip${}\Longrightarrow{}$\ignorespaces\fi}
\def\implies{\ifmmode\Rightarrow \else
	\unskip${}\Rightarrow{}$\ignorespaces\fi}
\def\iff{\ifmmode\Longleftrightarrow \else
	\unskip${}\Longleftrightarrow{}$\ignorespaces\fi}

\let\:=\colon
\newtheorem{Theorem}{Theorem}[section]
\newtheorem{Lemma}[Theorem]{Lemma}
\newtheorem{Corollary}[Theorem]{Corollary}
\newtheorem{Proposition}[Theorem]{Proposition}
\newtheorem{Remark}[Theorem]{Remark}

\let\epsilon=\varepsilon
\let\phi=\varphi
\let\kappa=\varkappa
\numberwithin{equation}{section}

\title{On the closed neighborhood ideal of the square of the path graph}
 \author{ Anda Olteanu \and Oana Olteanu}

\address{Faculty of Marine Engineering, ``Mircea cel B\u atr\^an" Naval Academy, Fulgerului Street, no. 1,
	900218 Constanta, Romania,} \email{olteanuandageorgiana@gmail.com}
\address{Faculty of Applied Sciences,
	National University of Science and Technology Politehnica Bucharest, 
	Splaiul Independen\c tei, No.
	313, 060042, Bucharest, Romania}\email{olteanuoanastefania@gmail.com} 
\begin{document}
	
	\begin{abstract} We consider the closed neighborhood ideal of square of the path graph and study its invariants. We compute the height, the projective dimension and the Castelnuovo--Mumford regularity. We prove that these ideals are sequentially Cohen--Macaulay and characterize when they are Cohen--Macaulay.
		
		\textbf{Keywords}: Castelnuovo--Mumford regularity, projective dimension, simplicial complex\\ 
		
		\textbf{MSC}: Primary 13D05 Secondary 13F55
	\end{abstract}

	
	\maketitle
	
	\section*{Introduction}
Squarefree monomial ideals are a central theme of research in both commutative algebra and algebraic combinatorics. It is common to express invariants of squarefree monomial ideals in terms of some combinatorial structures that can be attached to them (simplicial complexes, hypergraphs or graphs). 

In 2020, Sharifan and Moradi defined the notion of closed neighborhood ideal of a graph as being the squarefree monomial ideal whose generators are given by the closed neighborhoods of the vertices \cite{SM}. They studied several invariants such as the projective dimension, the height, and the Castelnuovo--Mumford regularity and relate them to combinatorial invariants such as the domination number or the matching number. Recently, they continued their study with chordal graphs \cite{MS1}. Closed neighborhood ideals of graphs have been studied also by Joseph, Roy, and Singh studied the minimal free resolution in the framework of the Barile--Macchia resolutions \cite{JRS}. They proved that for trees, the Barile--Macchia resolution is minimal. These ideals have been studied also for particular classes of graphs. For instance, Nasernejad, Bandari, and Roberts proved that the closed neighborhood ideals of complete bipartite graphs are normal, therefore they satisfy the (strong) persistence property \cite{NBR}. Moreover, Chakraborty, Joseph, Roy, and Singh \cite{CJRS} considered the Castelnuovo--Mumford regularity of closed neighborhood ideals of a forest $G$ and obtained that it is equal to the matching number of $G$, fact that was conjectured by Sharifan and Moradi \cite[Conjecture 2.11]{SM}.

 In this paper we pay attention to the closed neighborhood ideal of the square of path graph, $NI(P_n^2)$. We determine the height, the bigheight, the projective dimension, and the Castelnuovo--Mumford regularity, and we characterize all closed neighborhood ideals of $P_n^2$ which are Cohen--Macaulay. Moreover, if we assume that $NI(P_n^2)\subseteq S=k[x_1,\ldots,x_n]$, we prove that $S/NI(P_n^2)$ is sequentially Cohen--Macaulay for $n\geq7$. A similar result was obtained by Sharifan and Moradi for complete $r$-partite graphs \cite[Theorem 2.10 ((vii))]{SM}. We also pay attention to the simplicial complex  $\mathcal{NI}[P_n^2]$ whose facet ideal is $NI(P_n^2)$ and prove that it is shellable. Due to its nice structure, we are able to compute its $f$-vector and the reduced Euler characteristic.
 
 The paper is structured as follows: the first section is devoted to the study of algebraic invariants for the closed neighborhood ideal such as the height (Proposition \ref{height}), the projective dimension, and the Castelnuovo--Mumford regularity (Theorem \ref{pdreg}). As a consequence, we prove that, for closed neighborhood ideals of $P_n^2$, the depth and the Castelnuovo--Mumford regularity are equal Corollary \ref{depth-reg}. We also give a complete characterization of all the closed neighborhood ideals of $P_n^2$ which are Cohen--Macaulay (Corollary \ref{CM}).
 In the second section we study closed neighborhood ideals from the combinatorial point of view, by relating them to simplicial complexes. We consider the simplicial complex $\mathcal{NI}[P_n^2]$ such that the closed neighborhood ideal of $P_n^2$ is the facet ideal of $\mathcal{NI}[P_n^2]$. We prove that the simplicial complex is shellable (Proposition \ref{shellable}) and it has the free vertex property (Proposition \ref{freevertex}). This allows us to conclude that $S/NI(P_n^2)$ is sequentially Cohen--Macaulay (Theorem \ref{SCM1}). Finally, we compute the $f$-vector of $\mathcal{NI}[P_n^2]$ (Proposition \ref{fvector}) and, as a consequence, we obtain the reduced Euler characteristic.

	\section{Invariants of the closed neighborhood ideals of $P_n^2$}

	In this section we pay attention to closed neighborhood ideals of $P_n^2$. For this class of ideals we will compute the height, the projective dimension and the Castelnuovo--Mumford regularity. We start by recalling the notions that will be used (for more details, see \cite{BH} or \cite{V}).
	
	The path graph $P_n$ is the graph on the vertex set $V(P_n)=\{x_1,\ldots,x_n\}$ and with the set of edges $E(P_n)=\{\{x_i,x_{i+1}\}:1\leq i\leq n-1\}$. The square of the path graph is denoted by $P_n^2$ and is the graph with the same vertex set as $P_n$ and the set of edges $E(P_n^2)=E(P_n)\cup\{\{x_i,x_{i+2}\}:1\leq i\leq n-2\}$. In the figure bellow there is the graph $P_7$ and its square:
	 
	 \begin{center}
	 	\begin{figure}[h]
	 		\includegraphics[height=1cm]{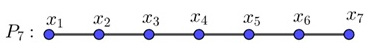}$\qquad$ \includegraphics[height=2cm]{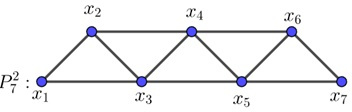}	\caption{The graphs $P_7$ and $P_7^2$}\label{P72graf}
	 	\end{figure}
	 \end{center}
	 
	 For a given finite simple graph $G$,\textit{ the neighborhood} of a vertex $x$ is the set $$N(x)=\{y\in V(G):\{x,y\}\in E(G)\}.$$ we denote by $N[x]=N(x)\cup\{x\}$ \textit{the closed neighborhood} of $x$. A set $M\subseteq V(G)$ is called \textit{a dominating set} is $M\cap N[x]\neq\emptyset$ for any vertex $x\in V(G)$ and it is called a \textit{minimal dominating set} if none of its proper subsets is a dominating set. \textit{The domination number} of $G$ is denoted by $\gamma(G)$ and is the minimum size of a dominating set of $G$. We denote by $\gamma'_G$ the maximal size of a minimal dominating set of $G$.
	 
	 For a finite, simple graph $G$ with $V(G)=\{x_1,\ldots,x_n\}$ let $S=k[x_1,\ldots,x_n]$ be the polynomial ring over a field $k$. (The same notation will be used for both vertices and variables where there will be no confusion from the context) For a set of vertices $F\subseteq V(G)$ we consider the squarefree monomial $\mathbf{x}_F=\prod\limits_{x_i\in F}x_i$. We will refer to $F$ as \textit{the support of the monomial} $\mathbf{x}_F$ and we will denote it by $\supp(\mathbf{x}_F)$. Sharifan and Moradi  defined the notion of closed neighborhood ideal, denoted by $NI(G)$, as being the squarefree monomial ideal generated by the monomials which correspond to the closed neighborhoods of vertices (\cite[Definition 2.1]{SM}): $$NI(G)=\left(\mathbf{x}_{N[x_i]}:x_i\in V(G) \right).$$
	 For instance, taking into account Figure \ref{P72graf}, we get
	 
	 $$NI(P_7^2)=\left(\mathbf{x}_{N[x_1]}=x_1x_2x_3,\mathbf{x}_{N[x_2]}=x_1x_2x_3x_4,\mathbf{x}_{N[x_3]}= x_1x_2x_3x_4x_5,\right.$$ $$\left.\mathbf{x}_{N[x_4]}=x_2x_3x_4x_5x_6,\mathbf{x}_{N[x_5]}=x_3x_4x_5x_6x_7,\mathbf{x}_{N[x_6]}=x_4x_5x_6x_7,\mathbf{x}_{N[x_7]}=x_5x_6x_7\right)$$ that is
	 
	 $$NI(P_7^2)=(x_1x_2x_3,x_2x_3x_4x_5x_6,x_5x_6x_7).$$
	Since the path graphs that we use are defined on various set of vertices, throughout this section, we will denote by $P_n$ the path on $\{x_1,\ldots,x_n\}$ and by $P_{\{i,\ldots,n\}}$ the path on the set of vertices $\{x_i,\ldots,x_n\}$, for $1<i<n$. Also, $S=k[x_1,\ldots,x_n]$ will be the polynomial ring in $n$ variables over a field $k$. Otherwise, the polynomial ring will be described explicitly.
	
	  Since the shape of the closed neighborhood ideal depends on $n$, we will firstly consider $n\leq 6$.
	
	\begin{Proposition}\label{n<=6}
		Let $3\leq n\leq 6$ be an integer and $NI(P_n^2)$ be the closed neighborhood ideal. The following hold:
		\begin{itemize}
			\item[(i)] $\height(NI(P_3^2)=1$, $\reg(S/NI(P_3^2))=2$, and $\projdim(S/NI(P_3))^2=1$ .
			\item[(ii)] $\height(NI(P_4^2)=1$, $\reg(S/NI(P_4^2))=2$, and $\projdim(S/NI(P_4))^2=2$.
			\item[(iii)] $\height(NI(P_5^2)=1$, $\reg(S/NI(P_5^2))=3$, and $\projdim(S/NI(P_5))^2=2$.
			\item[(iv)] $\height(NI(P_6^2)=2$, $\reg(S/NI(P_6^2))=4$, and $\projdim(S/NI(P_6))^2=2$ .  
		\end{itemize}
	\end{Proposition}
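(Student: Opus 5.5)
For $3\le n\le 6$ the plan is to write down $NI(P_n^2)$ explicitly, remove redundant generators, and observe that in every case the ideal has the form $u\cdot J$. Here $u$ is a squarefree monomial and $J$ is generated by a regular sequence of monomials in variables disjoint from $\supp(u)$. The minimal free resolution is then a shifted Koszul complex, so height, projective dimension and regularity can be read off directly.

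\textbf{Step 1: generators.} Listing the closed neighborhoods $N[x_i]$ in $P_n^2$ gives the following.
For $n=3$, every $N[x_i]=\{x_1,x_2,x_3\}$, so $NI(P_3^2)=(x_1x_2x_3)$.
For $n=4$, we get $NI(P_4^2)=(x_1x_2x_3,\,x_2x_3x_4)=x_2x_3(x_1,x_4)$, since $x_1x_2x_3x_4$ is redundant.
For $n=5$, we get $NI(P_5^2)=(x_1x_2x_3,\,x_3x_4x_5)=x_3(x_1x_2,\,x_4x_5)$, since the generators $x_1x_2x_3x_4$, $x_1\cdots x_5$ and $x_2x_3x_4x_5$ are multiples of these two.
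For $n=6$, we get $NI(P_6^2)=(x_1x_2x_3,\,x_4x_5x_6)$, because $N[x_2],N[x_3]\supseteq N[x_1]$ and $N[x_4],N[x_5]\supseteq N[x_6]$.

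\textbf{Step 2: resolutions.} Let $u$ be a monomial of degree $d$ and let $f_1,\dots,f_r$ be a regular sequence of monomials in variables not dividing $u$. Multiplication by $u$ is injective, so $uJ\cong J(-d)$. Hence the minimal resolution of $S/uJ$ is
$$0\to K_r(-d)\to\cdots\to K_1(-d)\to S,$$
where $K_\bullet$ is the Koszul complex on $f_1,\dots,f_r$. This complex is minimal because all maps have entries in the maximal ideal.

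\begin{itemize}
\item For $n=3$, the resolution is $0\to S(-3)\to S$. This gives $\projdim=1$, $\reg=3-1=2$, and height $1$.
\item For $n=4$, the resolution is $0\to S(-4)\to S(-3)^2\to S$. This gives $\projdim=2$ and $\reg=\max\{3-1,\,4-2\}=2$.
\item For $n=5$, the resolution is $0\to S(-5)\to S(-3)^2\to S$. This gives $\projdim=2$ and $\reg=\max\{2,\,3\}=3$.
\item For $n=6$, the resolution is $0\to S(-6)\to S(-3)^2\to S$. This gives $\projdim=2$ and $\reg=6-2=4$.
\end{itemize}

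\textbf{Step 3: heights.} The height of a squarefree monomial ideal is the minimal size of a vertex cover of its generators' supports. For $n=3,4,5$ there is a common variable ($x_1$, $x_2$ and $x_3$ respectively), so the height is $1$. For $n=6$ the two supports are disjoint, so no single variable covers both, and $(x_1,x_4)$ shows the height is exactly $2$.

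No step is a real obstacle. The only care needed is in Step 1: one must check correctly which neighborhood monomials are redundant, especially for $n=5$, where $N[x_3]$ is the whole vertex set.
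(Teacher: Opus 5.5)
Your proposal is correct and follows essentially the same route as the paper. Both arguments write down $NI(P_n^2)$ explicitly, factor it as a monomial times a complete intersection (for $n=6$ it is itself a complete intersection), and read the invariants off the resulting shifted Koszul resolution. Yours supplies more detail than the paper, namely the explicit graded shifts and the vertex-cover argument for the height.
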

	
	\begin{proof}
		(i) Since $NI(P_3^2)=(x_1x_2x_3)$, the statement follows.
		
		(ii) In this case $NI(P_4^2)=(x_1x_2x_3,x_2x_3x_4)=x_2x_3(x_1,x_4)$. The statement is straightforward.
		
		(iii) We have $NI(P_5^2)=(x_1x_2x_3,x_3x_4x_5)=x_3(x_1x_2,x_4x_5)$, therefore the formulae hold.
		
			(iv) We have $NI(P_5^2)=(x_1x_2x_3,x_4x_5x_6)$, so it is a complete intersection. The statement follows.
	\end{proof}
	Let $n\geq7$ be an integer, $NI(P^2_{n})\subset S=k[x_1,\ldots,x_n]$ be the closed neighborhood ideal of the square of the path graph $P_n$ with the set of minimal monomial generators
	$$\{m_1=x_1x_2x_3,m_{n-4}=x_{n-2}x_{n-1}x_n\}\cup\{m_i=x_ix_{i+1}x_{i+2}x_{i+3}x_{i+4}: 2\leq i\leq n-5\}.\ \ \ (*)$$
	
	Firstly we will compute the height of $NI(P_n^2)$.
	\begin{Proposition}\label{height}{\rm{(The height of $NI(P_n^2)$)}} Let $n\geq3$ be an integer. Then  
		$$\height(NI(P^2_{n}))=\left\lceil\frac{n}{5}\right\rceil.$$
	\end{Proposition}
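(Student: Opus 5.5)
The height of a squarefree monomial ideal is the minimum size of a vertex cover of its generators. So I will compute the smallest number of variables meeting every generator. By the standard fact, this is the minimal cardinality of a set $C\subseteq\{x_1,\ldots,x_n\}$ with $C\cap N[x_i]\neq\emptyset$ for all $i$, i.e.\ the domination number $\gamma(P_n^2)$. For $3\le n\le 6$ the claim agrees with Proposition \ref{n<=6} ($\lceil n/5\rceil=1,1,1,2$). So I assume $n\ge 7$ and use the generators $(*)$.

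\emph{Upper bound.} I exhibit a cover of size $\lceil n/5\rceil$. Take $C=\{x_3,x_8,x_{13},\ldots\}=\{x_{5j+3}: 0\le j\le \lceil n/5\rceil-1\}$, replacing the last index by $n$ if $5j+3>n$. Each $x_{5j+3}$ lies in $N[x_i]$ for $5j+1\le i\le 5j+5$. Hence these blocks of five consecutive closed neighbourhoods cover all $i$ with $1\le i\le 5\lceil n/5\rceil\ge n$. If the last chosen index exceeds $n$, the last block $\{5j+1,\ldots,n\}$ has fewer than five elements, and $x_n$ dominates $x_{n-2},x_{n-1},x_n$. I will check that the truncated block lies within $\{n-2,n-1,n\}$, or else choose the vertex $x_{\min(5j+3,n)}$ (or $x_{5j+1}$ suitably shifted), which covers all of $5j+1,\ldots,n$ since that range has length at most $4$. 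In terms of $(*)$, this says every $m_i$ contains some chosen variable.

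\emph{Lower bound.} Any variable $x_k$ divides the generators $x_{N[x_i]}$ only for $i\in\{k-2,\ldots,k+2\}$, which is at most five of the $n$ closed neighbourhoods. Thus a set $C$ meeting all $n$ closed neighbourhoods satisfies $5|C|\ge n$, so $|C|\ge\lceil n/5\rceil$. This step needs care because $NI(P_n^2)$ is generated minimally only by the monomials in $(*)$, not all $N[x_i]$. However, a prime containing $NI(P_n^2)$ contains every $x_{N[x_i]}$, since these all lie in the ideal. So a cover of the ideal's generators is a dominating set, and the counting applies.

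The main obstacle is the bookkeeping at the right end in the upper bound: making sure the last chosen vertex handles the residue $n \bmod 5$ correctly. I would settle it by a short case check on $n\bmod 5$.
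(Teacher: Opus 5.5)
Your proof is correct. The upper bound is essentially the paper's, but the lower bound takes a different route.

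For the upper bound, the paper's prime $(x_3,x_8,\ldots)$, completed by a last variable $x_{5s+1}$ or $x_{5s+2}$, is your dominating set $\{x_{5j+3}\}$ with the final vertex adjusted. The right-end check is easier than you suggest. Put $t=\lceil n/5\rceil$. If $5t-2>n$, the leftover block $\{x_{5t-4},\ldots,x_n\}$ has at most two vertices, so it lies in $N[x_n]$.

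For the lower bound, the paper uses only the minimal generators $(*)$. It exhibits $\lceil n/5\rceil$ pairwise disjoint ones ($m_1$, $m_4$, $m_9,\ldots$, $m_{n-4}$), so any prime containing the ideal needs a distinct variable for each. This requires $n\ge 7$, a separate treatment of $n\le 6$, and a split by the residue of $n$ modulo $5$. You instead double-count over all $n$ monomials $\mathbf{x}_{N[x_i]}$, using that each variable divides at most five of them. This is legitimate by your remark that a prime containing $NI(P_n^2)$ contains every $\mathbf{x}_{N[x_i]}$, not only the minimal generators.

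What each approach buys:
\begin{itemize}
\item Your counting argument works uniformly for every $n\ge 1$, so you need neither $(*)$ nor Proposition \ref{n<=6}. It also gives $\lceil n/(2k+1)\rceil$ for $P_n^k$ verbatim.
\item The paper's packing argument shows the bound is attained by pairwise disjoint minimal generators. Thus the generator hypergraph has matching number equal to the height, and $NI(P_n^2)$ contains a monomial complete intersection of that height.
\end{itemize}

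You also reverse the paper's order: you compute $\gamma(P_n^2)$ and deduce the height, whereas the paper derives Corollary \ref{domnumpn2} from the height. There is no circularity, because you verify both directions of the cover/dominating-set correspondence directly.
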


\begin{proof} Note that, for $n\leq6$, the formula hold by Proposition \ref{n<=6}. We assume now that $n\geq7$, so the generators are of the form given in $(*)$.
	
	We will analyze the following cases:
	
	\textit{Case 1}: Assume that $n\equiv 0 (\mod\ 5)$, that is $n=5s$, for some integer $s\geq 1$. We claim that a minimal prime ideal of $NI(P^2_{n})$, where
	$$\mathcal{G}(NI(P^2_{n}))=\{m_1=x_1x_2x_3,m_{5s-4}=x_{5s-2}x_{5s-1}x_{5s}\}\cup$$
	$$\cup\{m_i=x_ix_{i+1}x_{i+2}x_{i+3}x_{i+4}: 2\leq i\leq 5s-5\}$$ is 
	$$\frak p=(x_3,x_8,\ldots,x_{5s-2})=(x_{5k+3}:0\leq k\leq s-1).$$
	Indeed, it is clear that $x_3\mid m_1$ and $x_{5s-2}\mid m_{5s-4}$. For every $2\leq i\leq 5s-5$, since the monomial $m_i$ has the support consisting of five consecutive variables $x_i,\ldots, x_{i+4}$, we have that there exists $0\leq k\leq s-1$ such that $x_{5k+3}\mid m_i$. Therefore we obtain that $\frak p\supseteq NI(P^2_{n})$ and $\height NI(P^2_{n})\leq \height\frak p=s=[\frac{n}{5}]$.
	
	Assume that there exists another minimal prime ideal $\frak q$ of $NI(P^2_{n})$ such that $\height \frak q<s.$ Since $m_1=x_1x_2x_3$ and $m_{5s-4}=x_{5s-2}x_{5s-1}x_{5s}$ are minimal monomial generators of $NI(P^2_{n})$ there exist $1\leq j\leq 3$ and $5s-2\leq k\leq 5s$ such that $\frak q\supseteq(x_j,x_k)$. Moreover, 
	$$\{m_{4}=x_4x_5x_6x_7x_8,\ldots, m_{5s-7}=x_{5s-7}x_{5s-6}x_{5s-5}x_{5s-4}x_{5s-3}\}\subseteq \mathcal{G}(NI(P_{n})),$$
	therefore $\frak q$ contains at least one variable from each of disjoint sets:
	$$A_{1}=\{x_4,x_5,x_6,x_7,x_8\}, A_{2}=\{x_9,x_{10},x_{11},x_{12},x_{13}\},\ldots$$
	$$ A_{s-2}=\{x_{5s-10},x_{5s-9},x_{5s-8},x_{5s-7},x_{5s-6}\}.$$
	Hence $\height \frak q\geq 2+(s-2)=s$, contradiction.
	
	\textit{Case 2}: Assume that $n\equiv 1,2,3 (\mod\ 5)$, that is $n=5s+r$, $1\leq r\leq 3$, for some integer $s\geq 1$. We claim that a minimal prime ideal of $NI(P^2_{n})$ is 
	$$\frak p=(x_3,x_8,\ldots,x_{5s-2},x_{5s+1})=(x_{5k+3}:0\leq k\leq s-1)+(x_{5s+1}).$$
	One has that $x_3\mid m_1$ and $x_{5s+1}\mid m_{n-4}$. For every $2\leq i\leq n-5$ there exists $0\leq k\leq s-1$ such that $x_{5k+3}\mid m_i$ since the monomial $m_i$ has the support consisting of five consecutive variables. Therefore, also in this case, we obtain that $\frak p\supseteq NI(P^2_{n})$ and $\height NI(P^2_{n})\leq \height\frak p=s+1=\lceil\frac{n}{5}\rceil$.
	
	Assume that there exists another minimal prime ideal $\frak q$ of $NI(P^2_{n})$ such that $\height \frak q\leq s.$ Similar to previous case, $m_1=x_1x_2x_3$ and $m_{n-4}=x_{n-2}x_{n-1}x_{n}$ are minimal monomial generators of $NI(P^2_{n})$, therefore there exist $1\leq j\leq 3$ and $n-2\leq l\leq n$ such that $\frak q\supseteq(x_j,x_l)$. 
	
	We have that 
	$$\{m_{4}=x_4x_5x_6x_7x_8,\ldots, m_{n-7}=x_{n-7}x_{n-6}x_{n-5}x_{n-4}x_{n-3}\}\subseteq \mathcal{G}(I),$$
	therefore $\frak q$ contains at least one variable from each of disjoint sets:
	$$A_{1}=\{x_4,x_5,x_6,x_7,x_8\}, A_{2}=\{x_9,x_{10},x_{11},x_{12},x_{13}\},\ldots$$
	$$ A_{s-1}=\{x_{n-7},x_{n-6},x_{n-5},x_{n-4},x_{n-3}\}.$$
	Hence $\height \frak q\geq 2+(s-1)=s+1$, contradiction.
	
		\textit{Case 3}: Assume that $n\equiv 4 (\mod\ 5)$, that is $n=5s+4$, for some integer $s\geq 1$. We claim that a minimal prime ideal of $NI(P^2_{n})$ is 
	$$\frak p=(x_3,x_8,\ldots,x_{5s-2},x_{5s+2})=(x_{5k+3}:0\leq k\leq s-1)+(x_{5s+2}).$$
	The proof is analogue to \textit{Case 2.}
\end{proof}


	



In \cite{SM}, a connection between the height of the closed neighborhood ideal of a graph $G$ and the dominating number of $G$ is established:

\begin{Proposition}\label{domnum}\cite[Lemma 2.2]{SM} Let $G$ be a graph. Then $\height(NI(G))=\gamma(G)$ and $\bigheight(G)=\gamma'_G$.
	
\end{Proposition}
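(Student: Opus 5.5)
The statement is that, for any graph $G$, the minimal primes of $NI(G)$ correspond exactly to the minimal dominating sets of $G$. Both equalities then follow at once. So the plan is to set up this correspondence and read off the two heights from it.

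First, recall how primes over a squarefree monomial ideal look. Since $NI(G)$ is squarefree monomial, each of its minimal primes has the form $\frak p_M=(x_i : x_i\in M)$ for some $M\subseteq V(G)$. A monomial prime $\frak p_M$ contains $NI(G)$ exactly when $\frak p_M$ contains every generator $\mathbf{x}_{N[x]}$. This happens exactly when, for every vertex $x$, some variable of $N[x]$ lies in $M$, that is, $M\cap N[x]\neq\emptyset$. By the definition recalled above, that is precisely the condition that $M$ is a dominating set of $G$.

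Next, pass to minimality. The correspondence $M\mapsto \frak p_M$ preserves inclusion, so the inclusion-minimal primes containing $NI(G)$ correspond to the inclusion-minimal dominating sets. In other words, $\operatorname{Min}(NI(G))=\{\frak p_M : M \text{ a minimal dominating set of } G\}$. In addition, $\height \frak p_M=|M|$.

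Finally, read off the two invariants. The height of $NI(G)$ is the minimum of $|M|$ over minimal dominating sets. This equals the minimum size of any dominating set, because every dominating set contains a minimal one; hence $\height(NI(G))=\gamma(G)$. The bigheight is the maximum height of a minimal prime, which is the maximum size of a minimal dominating set, namely $\gamma'_G$.

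The only point needing care is the standard fact that the minimal primes of a squarefree monomial ideal are monomial primes generated by variables. One can cite it directly, or derive it from $I=\bigcap_{\frak p} \frak p$, the intersection of the monomial primes containing $I$. Beyond this fact there is no real obstacle.
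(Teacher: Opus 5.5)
Your proof is correct. It is the standard argument: the minimal primes of $NI(G)$ are exactly the primes generated by the variables of a minimal dominating set, and both equalities are read off from this; the paper gives no proof of its own and simply cites Lemma 2.2 of Sharifan--Moradi, which rests on the same correspondence (note the statement's $\bigheight(G)$ is a typo for $\bigheight(NI(G))$, as you read it).
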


Therefore we also obtain the dominating number of $P_n^2$:
\begin{Corollary}\label{domnumpn2}
	Let $n\geq 3$ be an integer. Then $\gamma(P_n^2)=\left\lceil\frac{n}{5}\right\rceil.$
\end{Corollary}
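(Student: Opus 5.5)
This corollary follows by combining Proposition \ref{domnum} with Proposition \ref{height}. By \cite[Lemma 2.2]{SM}, recalled as Proposition \ref{domnum}, we have $\height(NI(G))=\gamma(G)$ for every graph $G$. Taking $G=P_n^2$ with $n\geq 3$, Proposition \ref{height} gives $\height(NI(P_n^2))=\lceil n/5\rceil$, and hence $\gamma(P_n^2)=\lceil n/5\rceil$.

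For completeness, I would also note why Proposition \ref{domnum} holds, because this explains why the minimal primes built in Proposition \ref{height} are exactly dominating sets. A prime $(x_i : i\in M)$ contains $NI(G)$ if and only if, for each vertex $x$, some $x_i$ with $i\in M$ divides $\mathbf{x}_{N[x]}$. This says precisely that $M\cap N[x]\neq\emptyset$ for all $x$, i.e.\ that $M$ is a dominating set. Minimal primes therefore correspond to minimal dominating sets, and the height, being the minimum number of generators of a minimal prime, equals $\gamma(G)$.

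As a sanity check, the explicit set in Case~1 of Proposition \ref{height}, namely $\{x_3,x_8,\ldots,x_{5s-2}\}$, dominates $P_{5s}^2$. Each chosen vertex dominates five consecutive vertices, with suitable truncation at the ends, and these blocks tile $\{x_1,\ldots,x_{5s}\}$.

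The only possible subtlety is that the case $n\leq 6$ is also covered. This is handled by Proposition \ref{n<=6}, which Proposition \ref{height} already invokes, so no real obstacle remains.
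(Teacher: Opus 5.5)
Your proposal is correct and is exactly the paper's argument: the corollary follows by combining $\height(NI(G))=\gamma(G)$ from Proposition \ref{domnum} with $\height(NI(P_n^2))=\lceil n/5\rceil$ from Proposition \ref{height}, which already covers $3\le n\le 6$ via Proposition \ref{n<=6}. Your extra sketch of why minimal primes of $NI(G)$ correspond to minimal dominating sets is accurate, as is your tiling check for $n=5s$, but neither is needed.
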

\begin{proof}
	The proof is a direct consequence of Propositions \ref{height} and \ref{domnum}. 
\end{proof}
Next we pay attention to the Castelnuovo--Mumford regularity and to the projective dimension. We will assume that $n\geq 7$, therefore the closed neighborhood ideal of $P_n^2$ can be written as
$$NI(P^2_{n})=(x_1x_2x_3)+I_5(P_{\{2,\ldots,n-1\}})+(x_{n-2}x_{n-1}x_n)$$
where by $I_t(P_n)$ we mean the ideal of the paths of length $t$ of the path graph $P_n$. 
\begin{Remark}\rm We will use the expression $n=cp+d$ for emphasizing that $d$ is the remainder, $0\leq d\leq p-1$. 
	\end{Remark}
	We recall that the projective dimension and the Castelnuovo-Mumford regularity of path ideals of paths were given in \cite[Corollary 4.15]{HeT}:

\begin{Lemma}\cite[Corollary 4.15]{HeT}\label{pdpath}
	Let $n,t,p,d$ be integers such that $n\geq 2$, $2\leq t\leq n$, $n=p(t+1)+d$, where $p\geq 0$ and $0\leq d\leq t$. Then
	\begin{itemize}
		\item the projective dimension of the path ideal of a path $P_n$ is given by 
		$$\projdim(S/I_t(P_n))=\begin{cases}
			2p,&d\neq t\\
			2p+1,&d=t
		\end{cases}$$
		\item the regularity of the path ideal of a path $P_n$ is given by 
		$$\reg(S/I_t(P_n))=\begin{cases}
			p(t-1),&d<t\\
			(p+1)(t-1),&d=t
		\end{cases}$$
	\end{itemize}
\end{Lemma}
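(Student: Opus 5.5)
This lemma is quoted from \cite[Corollary 4.15]{HeT}, so within the paper one simply invokes that reference. If I had to prove it independently, I would argue by induction on $n$, using the short exact sequences attached to the last variable $x_n$ together with the standard behaviour of $\projdim$ and $\reg$ under short exact sequences.

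Write $I=I_t(P_n)$ and let $u=x_{n-t+1}\cdots x_n$ be the last generator, so $I=I_t(P_{n-1})S+(u)$. Consider the exact sequence
$$0\to S/(I_t(P_{n-1}):u)(-t)\to S/I_t(P_{n-1})\to S/I\to 0.$$
The colon ideal $(I_t(P_{n-1}):u)$ can be computed explicitly. The generators $x_j\cdots x_{j+t-1}$ with $j\geq n-2t+1$ overlap the support of $u$, and they reduce to $x_j\cdots x_{n-t}$. The minimal ones are then the single variable $x_{n-t}$ together with $I_t$ of the path on $x_1,\ldots,x_{n-t-2}$, since $x_{n-t}$ kills every path that contains it. Hence
$$(I_t(P_{n-1}):u)=I_t(P_{\{1,\ldots,n-t-2\}})+(x_{n-t}),$$
where the first summand is a path ideal on $n-t-2$ vertices. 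Because $x_{n-t}$ is a regular element, adding it raises $\projdim$ by one and leaves $\reg$ unchanged. This gives the recursions
$$\projdim S/I\leq\max\{\projdim S/I_t(P_{n-1}),\ \projdim S/I_t(P_{n-t-2})+2\},$$
$$\reg S/I\leq\max\{\reg S/I_t(P_{n-1}),\ \reg S/I_t(P_{n-t-2})+t-1\}.$$

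Passing from $n-t-2$ to $n$ adds $t+2$ vertices, which shifts $p$ by one and $d$ by one, so the induction closes up to boundary effects. The base cases $n<t$ (zero ideal) and $t\leq n\leq 2t$ (a few generators, whose Taylor resolution is easily computed) are checked directly.

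The main obstacle is obtaining equality rather than only upper bounds. For this I would use the alternative presentation of $S/I$ via splitting off the variable $x_{n-t}$: apply Hochster's formula, or exhibit explicit nonvanishing Betti numbers in the multidegree given by the union of generators spaced $t+1$ apart, which produce homology in the appropriate degree. Specifically, the generators $x_{j(t+1)+1}\cdots x_{j(t+1)+t}$ for $0\leq j<p$ form a regular sequence whose Koszul complex contributes $\beta_{p,pt}$. Combining pairs of neighbouring generators gives the doubled projective dimension $2p$ via the Betti number $\beta_{2p,\,p(t+1)}$, and its regularity $p(t+1)-2p=p(t-1)$ matches the claimed value. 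When $d=t$ there is one extra generator available, which adds one to both invariants as stated.

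The delicate part is verifying that these extremal multidegrees survive in the long exact sequence of $\Tor$, i.e.\ that no cancellation occurs. I would check this first by computing the upper Koszul simplicial complex in that multidegree, which should be a sphere of the right dimension.
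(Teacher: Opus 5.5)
Citing \cite[Corollary 4.15]{HeT} is all the paper does, so that part of your proposal matches. The problem is your independent sketch, which breaks at the colon ideal.

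A generator $x_j\cdots x_{j+t-1}$ of $I_t(P_{n-1})$ that contains neither $x_{n-t}$ nor any variable of $\supp(u)$ must satisfy $j+t-1\leq n-t-1$. So it lives on $x_1,\ldots,x_{n-t-1}$, and
\[
(I_t(P_{n-1}):u)=I_t(P_{\{1,\ldots,n-t-1\}})+(x_{n-t}),
\]
a path on $n-t-1$ vertices, not $n-t-2$. With your version the displayed recursions are false, not merely lossy. Take $t=2$, $n=5$: you get $(I(P_4):x_4x_5)=(x_3)$ instead of $(x_1x_2,x_3)$. Your bounds then give $\projdim S/I(P_5)\leq 2$ and $\reg S/I(P_5)\leq 1$, while the lemma (and a direct check) gives $3$ and $2$. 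Your remark that the step ``shifts $d$ by one'' and closes ``up to boundary effects'' is the symptom of this: the induction does not close.

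With the correct colon, the recursion goes from $n$ to $n-t-1$, which lowers $p$ by one and keeps $d$ fixed. Checking the three cases $d=0$, $0<d<t$, $d=t$ shows that
\[
\max\{\projdim S/I_t(P_{n-1}),\ \projdim S/I_t(P_{n-t-1})+2\}\quad\text{and}\quad \max\{\reg S/I_t(P_{n-1}),\ \reg S/I_t(P_{n-t-1})+t-1\}
\]
are exactly the claimed values.

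What you call the main obstacle also disappears. The variable $x_n$ divides $u$ but no generator of $I_t(P_{n-1})$. By Sharifan's criterion \cite[Corollary 2.6]{S}, the tool the paper itself uses in Lemma \ref{reg-1} and Theorem \ref{pdreg}, the mapping cone is therefore a minimal free resolution, and both invariants equal these maxima.

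So no Hochster-formula or Koszul-sphere lower bounds are needed. As written, that part also has a slip: in the case $d=t$ the extra generator raises $\reg$ by $t-1$, not by one.
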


The following result of Sharifan will be a key point in our study:

\begin{Lemma}\cite[Corollary 2.6]{S} \rm If $I$ is a monomial ideal of $S$ and $u$ is a monomial which does not belong to $I$, then the minimal free resolution of $S/(I,u)$ is given by the mapping cone technique provided that there is $x_i\in\supp(u)$ such that for all $v\in \mathcal{G}(I)$ $\deg_{x_i} (u) > \deg_{x_i}(v)$.
\end{Lemma}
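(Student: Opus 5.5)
The plan is to realize the resolution of $S/(I,u)$ as a mapping cone coming from the standard short exact sequence, and then to use the $\mathbb{Z}^n$-grading to rule out any cancellation. Throughout, write $\mathbf{a}_u$ for the exponent vector of $u$.

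First I set up the sequence. Since $u\notin I$, multiplication by $u$ gives an exact sequence of $\mathbb{Z}^n$-graded $S$-modules
$$0\longrightarrow S/(I:u)(-\mathbf{a}_u)\xrightarrow{\ \cdot u\ } S/I\longrightarrow S/(I,u)\longrightarrow 0 .$$
Let $\mathbb{F}$ be the minimal multigraded free resolution of $S/(I:u)$, so that $\mathbb{F}(-\mathbf{a}_u)$ resolves the left-hand module. Let $\mathbb{G}$ be the minimal multigraded free resolution of $S/I$. Multiplication by $u$ lifts to a multidegree-preserving comparison morphism $\phi\colon \mathbb{F}(-\mathbf{a}_u)\to\mathbb{G}$. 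The mapping cone of $\phi$ is then a multigraded free resolution of $S/(I,u)$.

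The only thing left to prove is minimality. The differentials of $\mathbb{F}$ and $\mathbb{G}$ already have entries in $\mathfrak m=(x_1,\ldots,x_n)$, so the cone is minimal exactly when every component $\phi_j\colon F_j(-\mathbf{a}_u)\to G_j$ has entries in $\mathfrak m$. Because $\phi_j$ is multihomogeneous, an entry between basis elements of multidegrees $\alpha$ (in the source) and $\beta$ (in the target) is a scalar times $\mathbf{x}^{\alpha-\beta}$. Such an entry can be a unit only if $\alpha=\beta$. It therefore suffices to show that no multidegree of a basis element of $\mathbb{F}(-\mathbf{a}_u)$ equals a multidegree of a basis element of $\mathbb{G}$.

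To compare the multidegrees I look only at the $x_i$-coordinate, where $x_i$ is the variable given in the hypothesis, and put $e=\deg_{x_i}(u)$.
\begin{itemize}
\item[$\mathbb{G}$:] By the Taylor resolution, every multidegree in the minimal resolution of $S/I$ is the lcm of some subset of $\mathcal G(I)$. Its $x_i$-exponent is therefore at most $\max_{v\in\mathcal G(I)}\deg_{x_i}(v)$, which is $<e$ by hypothesis.
\item[$\mathbb{F}(-\mathbf{a}_u)$:] Every multidegree in $\mathbb{F}$ is the exponent vector of an lcm of generators of $(I:u)$, and these generators are among the $v/\gcd(u,v)$ with $v\in\mathcal G(I)$. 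This holds in degree $0$ too, where the only multidegree is $\mathbf 0$. After the shift by $\mathbf{a}_u$, every multidegree in $\mathbb{F}(-\mathbf{a}_u)$ has $x_i$-exponent at least $e$.
\end{itemize}
Hence the two sets of multidegrees are disjoint. So every entry of every $\phi_j$ lies in $\mathfrak m$, and the mapping cone is minimal.

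The step needing the most care is this last disjointness check. One must make sure that the Taylor bound on $\mathbb{G}$ and the shift bound on $\mathbb{F}(-\mathbf{a}_u)$ are applied in the same homological index. They are: the bounds hold uniformly in every homological degree, and $\phi_j$ maps $F_j(-\mathbf{a}_u)$ to $G_j$ in a single index $j$. With that confirmed, the argument closes, and as a byproduct
$$\beta_{j,\mathbf b}\bigl(S/(I,u)\bigr)=\beta_{j,\mathbf b}(S/I)+\beta_{j-1,\mathbf b-\mathbf{a}_u}\bigl(S/(I:u)\bigr),$$
which is the form in which the result will be used for $\projdim$ and $\reg$ of $NI(P_n^2)$.
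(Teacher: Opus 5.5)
Your proof is correct. The paper gives no proof of this lemma and simply imports it from Sharifan. What you wrote is the standard multigraded argument. Every basis element of $\mathbb{G}$ has $x_i$-exponent at most $\max_{v\in\mathcal G(I)}\deg_{x_i}(v)<\deg_{x_i}(u)$, while every basis element of $\mathbb{F}(-\mathbf{a}_u)$ has $x_i$-exponent at least $\deg_{x_i}(u)$. Hence $\phi\otimes_S k=0$ in every homological degree, and the cone is minimal.

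Two small remarks:
\begin{enumerate}
\item For the lower bound on $\mathbb{F}(-\mathbf{a}_u)$ you only need that multidegrees in a resolution of $S/(I:u)$ are nonnegative. The description of the generators of $I:u$ is therefore unnecessary.
\item Your Betti number identity is exactly what justifies the formulas $\reg(S/(I,u))=\max\{\reg(S/(I:u))+\deg(u)-1,\ \reg(S/I)\}$ and $\projdim(S/(I,u))=\max\{\projdim(S/(I:u))+1,\ \projdim(S/I)\}$. The paper uses these in Lemma~\ref{reg-1} and Theorem~\ref{pdreg}.
\end{enumerate}
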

So, in order to compute the two invariants, we will apply twice the mapping cone technique. Firstly we will apply it to the short exact sequence
$$0\rightarrow \frac{S}{I_5(P_{\{2,\ldots,n-1\}}):(x_1x_2x_3)}(-3)\rightarrow\frac{S}{I_5(P_{\{2,\ldots,n-1\}})}\rightarrow \frac{S}{I_5(P_{\{2,\ldots,n-1\}})+(x_1x_2x_3)}\rightarrow 0.$$

Then, we will apply the mapping cone technique to the short exact sequence
$$0\rightarrow \frac{S}{(I_5(P_{\{2,\ldots,n-1\}}),x_1x_2x_3):(x_{n-2}x_{n-1}x_n)}(-3)\rightarrow\frac{S}{(I_5(P_{\{2,\ldots,n-1\}}),x_1x_2x_3)}\rightarrow $$ $$\rightarrow\frac{S}{NI(P^2_{n})}\rightarrow 0.$$
We start with the first exact sequence and we determine the Castelnuovo--Mumford regularity and the projective dimension for  $\frac{S}{I_5(P_{\{2,\ldots,n-1\}})+(x_1x_2x_3)}$. By \cite[Theorem 2.4]{S} applied in the first exact sequence, we have:
\begin{Lemma}\label{reg-1} In the above notations,
$$\reg\left(\frac{S}{I_5(P_{\{2,\ldots,n-1\}})+(x_1x_2x_3)}\right)=$$
$$=\max\left\{\reg\left(\frac{S}{I_5(P_{\{2,\ldots,n-1\}}):(x_1x_2x_3)}\right)+2,\reg\left(\frac{S}{I_5(P_{\{2,\ldots,n-1\}})}\right)\right\}$$
and $$\projdim\left(\frac{S}{I_5(P_{\{2,\ldots,n-1\}})+(x_1x_2x_3)}\right)=$$
$$=\max\left\{\projdim\left(\frac{S}{I_5(P_{\{2,\ldots,n-1\}}):(x_1x_2x_3)}\right)+1,\projdim\left(\frac{S}{I_5(P_{\{2,\ldots,n-1\}})}\right)\right\}.$$
\end{Lemma}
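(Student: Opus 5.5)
This is an application of Sharifan's result \cite[Corollary 2.6]{S} together with \cite[Theorem 2.4]{S}. Set $I=I_5(P_{\{2,\ldots,n-1\}})$ and $u=x_1x_2x_3$.

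\textbf{Step 1: $u\notin I$ and the degree condition.} Since $n\geq 7$, the set $\{x_2,\ldots,x_{n-1}\}$ has at least five vertices, so every minimal generator of $I$ is a product of five consecutive variables among $x_2,\ldots,x_{n-1}$. Hence no generator of $I$ divides $u$, which has degree $3$, and so $u\notin I$. For the degree condition we take $x_i=x_1\in\supp(u)$. Because $x_1$ is not a vertex of $P_{\{2,\ldots,n-1\}}$, we have $\deg_{x_1}(v)=0<1=\deg_{x_1}(u)$ for every $v\in\mathcal{G}(I)$. Thus \cite[Corollary 2.6]{S} applies.

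\textbf{Step 2: the mapping cone is minimal.} We use the short exact sequence
$$0\to S/(I:u)(-3)\to S/I\to S/(I,u)\to 0.$$
Let $\mathbb{F}$ and $\mathbb{G}$ be minimal free resolutions of $S/I$ and $S/(I:u)$. The comparison map $\mathbb{G}(-3)\to\mathbb{F}$ lifts multiplication by $u$, and by Step 1 the mapping cone of this map is a minimal free resolution of $S/(I,u)$. Its $i$-th module is $F_i\oplus G_{i-1}(-3)$.

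\textbf{Step 3: read off the invariants.} From the shape of the cone we get
$$\beta_{i,j}(S/(I,u))=\beta_{i,j}(S/I)+\beta_{i-1,j-3}(S/(I:u)).$$
The maximal $i$ with a nonzero term is the maximum of $\projdim(S/I)$ and $\projdim(S/(I:u))+1$, which is the projective dimension formula.

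For the regularity we compute $\max_i\{j-i\}$ over nonzero Betti numbers. The contribution from $\mathbb{G}$ is $(j'+3)-(i'+1)=(j'-i')+2$, where $\beta_{i',j'}(S/(I:u))\neq 0$. This gives the $+2$ shift, and hence the regularity formula. This is exactly \cite[Theorem 2.4]{S}.

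\textbf{Main obstacle.} The only real issue is the hypothesis check in Step 1, in particular choosing the variable $x_1$. With $x_1$ the check is immediate, since $x_1$ does not occur in $I$; choosing $x_2$ or $x_3$ would fail, because those variables appear in generators of $I$. Minimality of the cone is then supplied by \cite[Corollary 2.6]{S}. Once minimality is known, no cancellation can occur between Betti numbers, so the formulas are exact maxima rather than just upper bounds.
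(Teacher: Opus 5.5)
Your proposal is correct and follows the paper's route. The paper simply invokes \cite[Corollary 2.6]{S} and \cite[Theorem 2.4]{S} for the short exact sequence $0\to S/(I:u)(-3)\to S/I\to S/(I,u)\to 0$, and you do the same. Your check that $x_1$ satisfies the strict degree hypothesis (because $x_1$ does not occur in $I_5(P_{\{2,\ldots,n-1\}})$), together with your reading of the Betti numbers off the minimal mapping cone, supplies exactly the details the paper leaves implicit.
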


\begin{Remark}\rm
One may note that if we replace $t=5$, and $n-2=6p'+d'$ in Lemma \ref{pdpath}, we get
$$\reg\left(\frac{S}{I_5(P_{\{2,\ldots,n-1\}})}\right)=\reg\left(\frac{k[x_2,\ldots,x_{n-1}]}{I_5(P_{\{2,\ldots,n-1\}})}\right)=\begin{cases}
	4p',&d'<5\\
	4(p'+1),&d'=5
\end{cases}$$
and $$\projdim\left(\frac{S}{I_5(P_{\{2,\ldots,n-1\}})}\right)=\projdim\left(\frac{k[x_2,\ldots,x_{n-1}]}{I_5(P_{\{2,\ldots,n-1\}})}\right)=\begin{cases}
	2p',&d'<5\\
	2p'+1,&d'=5
\end{cases}.$$

\end{Remark}
\begin{Lemma}\label{reg1}
	For $n=6p+d$, with $p\geq 1$, one has
	$$\reg\left(\frac{S}{I_5(P_{\{2,\ldots,n-1\}})+(x_1x_2x_3)}\right)=\begin{cases}
		4p-2,&d=0\\
		4p,&d\in\{1,2,3\}\\
		4p+2,&d\in\{4,5\}
	\end{cases}$$
	and $$\projdim\left(\frac{S}{I_5(P_{\{2,\ldots,n-1\}})+(x_1x_2x_3)}\right)=\begin{cases}
		2p-1&d=0\\
		2p,&d\in\{1,2,3\}\\
		2p+1,&d\in\{4,5\}
	\end{cases}$$
	\end{Lemma}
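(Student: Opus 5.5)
The plan is to set up a recursion with step $3$ in $n$ by identifying the colon ideal in Lemma \ref{reg-1} with a shifted copy of the same kind of ideal, and then to conclude by induction on $n$. For $m\geq 3$ set
$$J_m=(x_1x_2x_3)+I_5(P_{\{2,\ldots,m-1\}}),$$
so that $J_n$ is exactly the ideal in the statement.

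\textbf{Step 1: the colon ideal.} I compute the colon generator by generator. Write $m_i=x_i\cdots x_{i+4}$ for $2\leq i\leq n-5$. Then:
\begin{itemize}
\item $m_2:(x_1x_2x_3)=x_4x_5x_6$;
\item $m_3:(x_1x_2x_3)=x_4x_5x_6x_7$ and $m_4:(x_1x_2x_3)=m_4$, and both are divisible by $x_4x_5x_6$, so they are redundant;
\item every $m_i$ with $i\geq 5$ is coprime to $x_1x_2x_3$, so it stays unchanged, and none of them is divisible by $x_4x_5x_6$.
\end{itemize}
Hence
$$I_5(P_{\{2,\ldots,n-1\}}):(x_1x_2x_3)=(x_4x_5x_6)+I_5(P_{\{5,\ldots,n-1\}}).$$
Under the shift $x_j\mapsto x_{j-3}$ this ideal becomes $J_{n-3}$; the variables $x_1,x_2,x_3$ and $x_n$ do not occur in it. Extending to a polynomial ring does not change $\reg$ or $\projdim$, so Lemma \ref{reg-1} becomes the recursion
$$\reg(S/J_n)=\max\{\reg(S/J_{n-3})+2,\ \reg(S/I_5(P_{n-2}))\},$$
$$\projdim(S/J_n)=\max\{\projdim(S/J_{n-3})+1,\ \projdim(S/I_5(P_{n-2}))\}.$$
Lemma \ref{reg-1} is justified because Sharifan's hypothesis holds: $x_1$ divides $u=x_1x_2x_3$ but divides no generator of $I_5(P_{\{2,\ldots,n-1\}})$. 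The second term in each maximum is given by the Remark following Lemma \ref{reg-1}, via Lemma \ref{pdpath} with $t=5$ and $n-2=6p'+d'$.

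\textbf{Step 2: induction on $n$ in steps of $3$.} Write $n=6p+d$. If $d\geq 3$, then $n-3=6p+(d-3)$; if $d\leq 2$, then $n-3=6(p-1)+(d+3)$. Similarly $n-2=6p+(d-2)$ for $d\geq 2$, and $n-2=6(p-1)+(d+4)$ for $d\in\{0,1\}$. I plug the induction hypothesis and the path formula into the recursion for each $d\in\{0,\ldots,5\}$. For instance, when $d=0$ the regularity recursion gives $\max\{(4p-4)+2,\,4p-4\}=4p-2$. When $d=1$ it gives $\max\{(4(p-1)+2)+2,\,4p\}=4p$. The projective dimension cases are handled the same way. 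Each of the twelve comparisons is routine.

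\textbf{Step 3: base cases.} The inductive step needs $n-3\geq 6$ with $p\geq 1$, so the base cases are $n=6,7,8$, and these I verify directly.
\begin{itemize}
\item For $n=6$: $J_6=(x_1x_2x_3,x_2x_3x_4x_5)=x_2x_3(x_1,x_4x_5)$. This gives $\reg=2=4p-2$ and $\projdim=1=2p-1$.
\item For $n=7,8$: the colon is $(x_4x_5x_6)$, respectively $(x_4x_5x_6)+I_5(P_{\{5,6,7\}})=(x_4x_5x_6)$, a principal ideal. Its $\reg$ is $2$ and its $\projdim$ is $1$. The path term is $0$ for $n=7$ ($P_5$ contains no path with $5$ vertices), while for $n=8$ it comes from $I_5(P_6)$. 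The recursion then yields the claimed values $4$ and $2$.
\end{itemize}

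\textbf{Main obstacle.} The step needing the most care is Step 1. One must check that exactly the generators $m_3$ and $m_4$ become redundant, and nothing else does. One must also check that the resulting ideal is precisely a shifted $J_{n-3}$, including the boundary behaviour when $n$ is small, so that both the recursion and the base cases are correct.
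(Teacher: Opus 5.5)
Your proof follows the paper's route. You apply Sharifan's mapping cone with $u=x_1x_2x_3$ (valid because $x_1$ occurs in no generator of $I_5(P_{\{2,\ldots,n-1\}})$), and you identify the colon with the shift $x_j\mapsto x_{j-3}$ of $J_{n-3}$. The path term comes from the He--Van Tuyl formula, and you induct on $n$ in steps of $3$. Your Step 1 and all twelve comparisons in the inductive step check out. The paper instead treats $n\in\{7,8,9\}$ as the cases with principal colon and inducts for $n\geq 10$; it also misstates $\reg(S/(x_4x_5x_6))$ as $1$, whereas your value $2$ is correct.

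What needs fixing is the base cases, which contain vertex-counting slips.

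\textbf{The case $n=6$.} The path $P_{\{2,\ldots,5\}}$ has only four vertices, so $I_5(P_{\{2,\ldots,5\}})=0$ and $J_6=(x_1x_2x_3)$. Your own Step 1 forces this: at $n=9$ the colon is $(x_4x_5x_6)$, which is the shift of $J_6$. The ideal $x_2x_3(x_1,x_4x_5)$ that you wrote down has minimal resolution $0\to S(-5)\to S(-3)\oplus S(-4)\to S$. Its quotient therefore has regularity $3$ and projective dimension $2$, not $2$ and $1$. The values you record are right only because the true $J_6$ is principal.

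\textbf{The case $n=7$.} The path $P_{\{2,\ldots,6\}}$ has exactly five vertices, so the path term is $S/(x_2x_3x_4x_5x_6)$, with regularity $4$ and projective dimension $1$, not $0$. The maxima $\max\{2+2,4\}=4$ and $\max\{1+1,1\}=2$ are unchanged, so the claimed values still hold.

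With these two corrections the base cases $n=6,7,8$ are right, and the proof is complete.
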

	
	\begin{proof}
		We will prove by induction on $n$. As mentioned before,
		$$\reg\left(\frac{S}{I_5(P_{\{2,\ldots,n-1\}})+(x_1x_2x_3)}\right)=$$
		$$=\max\left\{\reg\left(\frac{S}{I_5(P_{\{2,\ldots,n-1\}}):(x_1x_2x_3)}\right)+2,\reg\left(\frac{S}{I_5(P_{\{2,\ldots,n-1\}})}\right)\right\}$$
		$$\projdim\left(\frac{S}{I_5(P_{\{2,\ldots,n-1\}})+(x_1x_2x_3)}\right)=$$
		$$=\max\left\{\projdim\left(\frac{S}{I_5(P_{\{2,\ldots,n-1\}}):(x_1x_2x_3)}\right)+1,\projdim\left(\frac{S}{I_5(P_{\{2,\ldots,n-1\}})}\right)\right\}$$
		and $$\reg\left(\frac{S}{I_5(P_{\{2,\ldots,n-1\}})}\right)=\reg\left(\frac{k[x_2,\ldots,x_{n-1}]}{I_5(P_{\{2,\ldots,n-1\}})}\right)=\begin{cases}
			4p',&d'<5\\
			4(p'+1),&d'=5
		\end{cases}$$
		$$\projdim\left(\frac{S}{I_5(P_{\{2,\ldots,n-1\}})}\right)=\projdim\left(\frac{k[x_2,\ldots,x_{n-1}]}{I_5(P_{\{2,\ldots,n-1\}})}\right)=\begin{cases}
			2p',&d'<5\\
			2p'+1,&d'=5
		\end{cases}$$where $n-2=6p'+d'$. Therefore, for $n=6p+d$ we have
		$$\reg\left(\frac{S}{I_5(P_{\{2,\ldots,n-1\}})}\right)=\begin{cases}
			4p,&d\in\{1,2,3,4,5\}\\
			4(p-1),&d=0
		\end{cases}$$
		and $$\projdim\left(\frac{S}{I_5(P_{\{2,\ldots,n-1\}})}\right)=\begin{cases}
			2p,&d\in\{2,3,4,5\}\\
			2p-2,&d=0\\
			2p-1,&d=1
		\end{cases}$$
	 
		One may note that 
		$$I_5(P_{\{2,\ldots,n-1\}}):(x_1x_2x_3)=\begin{cases}
			(x_4x_5x_6),&n\in\{7,8,9\}\\
			I_5(P_{\{5,\ldots,n-1\}})+(x_4x_5x_6),&n\geq 10
		\end{cases}$$
		In this case we have $$\reg\left(\frac{S}{I_5(P_{\{2,\ldots,n-1\}}):(x_1x_2x_3)}\right)=\begin{cases}
			1,&n\in\{7,8,9\}\\
			\reg\left(\frac{S}{I_5(P_{\{5,\ldots,n-1\}})+(x_4x_5x_6)}\right),&n\geq 10
			\end{cases}$$
			and $$\projdim\left(\frac{S}{I_5(P_{\{2,\ldots,n-1\}}):(x_1x_2x_3)}\right)=\begin{cases}
				1,&n\in\{7,8,9\}\\
				\projdim\left(\frac{S}{I_5(P_{\{5,\ldots,n-1\}})+(x_4x_5x_6)}\right),&n\geq 10
			\end{cases}$$
			By induction we obtain
			$$\reg\left(\frac{S}{I_5(P_{\{5,\ldots,n-1\}})+(x_4x_5x_6)}\right)=\reg\left(\frac{k[x_4,\ldots,x_n]}{I_5(P_{\{5,\ldots,n-1\}})+(x_4x_5x_6)}\right)=$$ $$=\begin{cases}
				4p''-2,&d''=0\\
				4p'',&d''\in\{1,2,3\}\\
				4p''+2,&d''\in\{4,5\}
			\end{cases}$$
			and$$\projdim\left(\frac{S}{I_5(P_{\{5,\ldots,n-1\}})+(x_4x_5x_6)}\right)=\projdim\left(\frac{k[x_4,\ldots,x_n]}{I_5(P_{\{5,\ldots,n-1\}})+(x_4x_5x_6)}\right)=$$
			$$=\begin{cases}
				2p''-1,&d''=0\\
				2p'',&d''\in\{1,2,3\}\\
				2p''+1,&d''\in\{4,5\}
			\end{cases}$$
			where $n-3=6p''+d''$.
			Using the fact that $n=6p+d$, we obtain
				$$\reg\left(\frac{S}{I_5(P_{\{5,\ldots,n-1\}})+(x_4x_5x_6)}\right)=\begin{cases}
				4p-4,&d=0\\
				4p-2,&d\in\{1,2,3\}\\
				4p,&d\in\{4,5\}
			\end{cases}$$
			and $$\projdim\left(\frac{S}{I_5(P_{\{5,\ldots,n-1\}})+(x_4x_5x_6)}\right)=\begin{cases}
				2p-2,&d=0\\
				2p-1,&d\in\{1,2,3\}\\
				2p,&d\in\{4,5\}
			\end{cases}$$
		Since 	$$\reg\left(\frac{S}{I_5(P_{\{2,\ldots,n-1\}})+(x_1x_2x_3)}\right)=$$
		$$=\max\left\{\reg\left(\frac{S}{I_5(P_{\{2,\ldots,n-1\}}):(x_1x_2x_3)}\right)+2,\reg\left(\frac{S}{I_5(P_{\{2,\ldots,n-1\}})}\right)\right\}$$
		we obtain that $$\reg\left(\frac{S}{I_5(P_{\{2,\ldots,n-1\}})+(x_1x_2x_3)}\right)=\begin{cases}
			4p-2,&d=0\\
			4p,&d\in\{1,2,3\}\\
			4p+2,&d\in\{4,5\}
		\end{cases}$$as desired.
		Moreover, by $$\projdim\left(\frac{S}{I_5(P_{\{2,\ldots,n-1\}})+(x_1x_2x_3)}\right)=$$
		$$=\max\left\{\projdim\left(\frac{S}{I_5(P_{\{2,\ldots,n-1\}}):(x_1x_2x_3)}\right)+1,\projdim\left(\frac{S}{I_5(P_{\{2,\ldots,n-1\}})}\right)\right\}$$
		it results that 
		$$\projdim\left(\frac{S}{I_5(P_{\{2,\ldots,n-1\}})+(x_1x_2x_3)}\right)=\begin{cases}
			2p-1&d=0\\
			2p,&d\in\{1,2,3\}\\
			2p+1,&d\in\{4,5\}
		\end{cases}$$
	\end{proof}
Now we will use the second exact sequence:
\begin{Theorem}\label{pdreg}{\rm{(The Castelnuovo--Mumford regularity and the projective dimension)}}
	
	If $n=6p+d$, then
	$$\reg\left(\frac{S}{NI(P^2_{n})}\right)=\begin{cases}
		4p,&d\in\{0,1\}\\
		4p+1,&d=2\\
		4p+2,&d\in\{3,4\}\\
		4p+3,&d=5
	\end{cases}$$
	and $$\projdim\left(\frac{S}{NI(P^2_{n})}\right)=\begin{cases}
		2p,&d=0\\
		2p+1,&d\in\{1,2,3\}\\
		2p+2,&d\in\{4,5\}
	\end{cases}$$
\end{Theorem}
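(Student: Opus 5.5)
Write $J=I_5(P_{\{2,\ldots,n-1\}})+(x_1x_2x_3)$, so that $NI(P_n^2)=J+(x_{n-2}x_{n-1}x_n)$ for $n\geq 7$. The plan is to apply Sharifan's criterion \cite[Corollary 2.6]{S} to the second short exact sequence. The variable $x_n$ divides $u=x_{n-2}x_{n-1}x_n$ but no generator of $J$, so the mapping cone gives a minimal free resolution. As in Lemma \ref{reg-1}, we then get
$$\reg\left(\frac{S}{NI(P_n^2)}\right)=\max\left\{\reg\left(\frac{S}{J:u}\right)+2,\ \reg\left(\frac{S}{J}\right)\right\}$$
and
$$\projdim\left(\frac{S}{NI(P_n^2)}\right)=\max\left\{\projdim\left(\frac{S}{J:u}\right)+1,\ \projdim\left(\frac{S}{J}\right)\right\}.$$
The second entries of both maxima are given by Lemma \ref{reg1}.

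\textbf{Step 1: compute $J:u$.} Dividing by $x_{n-2}x_{n-1}$, each generator $x_i\cdots x_{i+4}$ of the path ideal with $i+4\geq n-2$ becomes $x_i\cdots x_{n-3}$. The generator with $i=n-5$ becomes $x_{n-5}x_{n-4}x_{n-3}$, and it absorbs the ones with $i=n-6,n-7$. Hence, for $n$ large enough (say $n\geq 13$, so that the two ends do not interact),
$$J:u=(x_1x_2x_3)+I_5(P_{\{2,\ldots,n-4\}})+(x_{n-5}x_{n-4}x_{n-3}).$$
This is exactly $NI(P^2_{n-3})$, written in the variables $x_1,\ldots,x_{n-3}$. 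For small $n$ (roughly $7\leq n\leq 12$), $J:u$ must be computed by hand. There the ends overlap, and we get ideals such as $(x_1x_2x_3,\,x_{n-5}x_{n-4}x_{n-3})$, or a principal ideal, or $x_3(\ldots)$, whose invariants are read off directly as in Proposition \ref{n<=6}.

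\textbf{Step 2: induction on $n$ with step $3$.} Since $J:u$ is $NI(P_{n-3}^2)$, the theorem for $n-3$ gives its regularity and projective dimension. Shifting $n\mapsto n-3$ changes $(p,d)$ as follows: for $d\geq 3$ it goes to $(p,d-3)$, and for $d<3$ it goes to $(p-1,d+3)$. We then compare case by case with Lemma \ref{reg1}. For example, for $d=0$ we get $\reg\bigl(S/(J:u)\bigr)+2=(4(p-1)+2)+2=4p$ against $4p-2$, so the maximum is $4p$. For the projective dimension we get $(2(p-1)+1)+1=2p$ against $2p-1$, so the maximum is $2p$. Both agree with the claimed values. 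The same bookkeeping must be checked for $d=1,\ldots,5$. For $d=2$ this gives $\reg=\max\{4p-1+2,\,4p\}=4p+1$; the remaining cases are analogous checks.

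\textbf{Main obstacle.} The delicate part is the base of the induction. Because the step is $3$, we need the cases $n=7,\dots,12$ (or whichever range is needed so that $n-3\geq 7$) checked directly. There $J:u$ is not of the form $NI(P^2_{n-3})$ with $n-3\geq7$, and the colon computation degenerates. A tempting shortcut is to identify $J:u$ with $NI(P_m^2)$ for $m=n-3\leq 6$ and use Proposition \ref{n<=6}. However, the small-$n$ generators have different shapes, so for each $7\leq n\leq 12$ we will write $J:u$ explicitly. We then compute its invariants either via Lemma \ref{pdpath} or via one further application of the mapping cone argument of Lemma \ref{reg1}, and verify the stated formulas there.
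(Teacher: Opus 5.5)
Your proposal is correct and follows the paper's own proof: Sharifan's criterion applied via $x_n$, the identification $J:u=NI(P^2_{n-3})$, and an induction in steps of $3$ compared case by case against Lemma \ref{reg1}. Your concern about the base cases is unfounded, though: $J:u=NI(P^2_{n-3})$ already holds for every $n\geq 10$, and for $n=7,8,9$ the colon is $(x_1x_2x_3,x_2x_3x_4)$, $(x_1x_2x_3,x_3x_4x_5)$ and $(x_1x_2x_3,x_4x_5x_6)$, which are exactly $NI(P_4^2)$, $NI(P_5^2)$ and $NI(P_6^2)$. So the ``tempting shortcut'' is valid, the base of the induction is just Proposition \ref{n<=6}, and no separate hand computation is needed for $7\leq n\leq 12$.
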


\begin{proof}Firstly, we note that, by Proposition \ref{n<=6}, the statement is true for $3\leq n\leq 6$. Therefore, we assume that $n\geq7$.
	We will apply the mapping cone technique to the short exact sequence

	$$0\rightarrow \frac{S}{(I_5(P_{\{2,\ldots,n-1\}}),x_1x_2x_3):(x_{n-2}x_{n-1}x_n)}(-3)\rightarrow\frac{S}{(I_5(P_{\{2,\ldots,n-1\}}),x_1x_2x_3)}\rightarrow $$ $$\rightarrow\frac{S}{NI(P^2_{n})}\rightarrow 0.$$
	As before, one has  $$\reg\left(\frac{S}{NI(P^2_{n})}\right)=$$
	$$=\max\left\{\reg\left(\frac{S}{(I_5(P_{\{2,\ldots,n-1\}}),x_1x_2x_3):(x_{n-2}x_{n-1}x_n)}\right)+2,\reg\left(\frac{S}{(I_5(P_{\{2,\ldots,n-1\}}),x_1x_2x_3)}\right)\right\}$$
	and
	 $$\projdim\left(\frac{S}{NI(P^2_{n})}\right)=$$
	$$=\max\left\{\projdim\left(\frac{S}{(I_5(P_{\{2,\ldots,n-1\}}),x_1x_2x_3):(x_{n-2}x_{n-1}x_n)}\right)+1,\projdim\left(\frac{S}{(I_5(P_{\{2,\ldots,n-1\}}),x_1x_2x_3)}\right)\right\}$$
	By Lemma \ref{reg1} we have
	$$\reg\left(\frac{S}{(I_5(P_{\{2,\ldots,n-1\}}),x_1x_2x_3)}\right)=\begin{cases}
		4p-2,&d=0\\
		4p,&d\in\{1,2,3\}\\
		4p+2,&d\in\{4,5\}
	\end{cases}$$
	and $$\projdim\left(\frac{S}{(I_5(P_{\{2,\ldots,n-1\}}),x_1x_2x_3)}\right)=\begin{cases}
		2p-1&d=0\\
		2p,&d\in\{1,2,3\}\\
		2p+1,&d\in\{4,5\}
	\end{cases}$$
	One may note that
	$$(I_5(P_{\{2,\ldots,n-1\}}),x_1x_2x_3):(x_{n-2}x_{n-1}x_n)=\begin{cases}
		(x_1x_2x_3,x_2x_3x_4),&n=7\\
		(x_1x_2x_3,x_3x_4x_5),&n=8\\
		(x_1x_2x_3,x_4x_5x_6),&n=9\\
		NI(P^2_{n-3}),&n\geq 10
	\end{cases}$$
	Hence
	$$\reg\left(\frac{S}{(I_5(P_{\{2,\ldots,n-1\}}),x_1x_2x_3):(x_{n-2}x_{n-1}x_n)}\right)=\begin{cases}
		2,&n=7\\
		3,&n=8\\
		4,&n=9\\
		\reg\left(\frac{S}{NI(P^2_{n-3})}\right),&n\geq 10
	\end{cases}$$
	and $$\projdim\left(\frac{S}{(I_5(P_{\{2,\ldots,n-1\}}),x_1x_2x_3):(x_{n-2}x_{n-1}x_n)}\right)=
	\begin{cases}
		2,&n\in\{7,8,9\}\\
		\projdim\left(\frac{S}{NI(P^2_{n-3})}\right),&n\geq 10
	\end{cases}$$
	By induction we obtain
	$$\reg\left(\frac{S}{NI(P^2_{n-3})}\right)=\reg\left(\frac{k[x_1,\ldots,x_{n-3}]}{NI(P^2_{n-3})}\right)=\begin{cases}
		4p'',&d''\in\{0,1\}\\
		4p''+1,&d''=2\\
		4p''+2,&d''\in\{3,4\}\\
		4p''+3,&d''=5
	\end{cases}$$
	and $$\projdim\left(\frac{S}{NI(P^2_{n-3})}\right)=\projdim\left(\frac{k[x_1,\ldots,x_{n-3}]}{NI(P^2_{n-3})}\right)=\begin{cases}
		2p''+1,&d''\in\{1,2,3\}\\
		2p''+2,&d''\in\{4,5\}\\
		2p'',&d''=0
	\end{cases}$$
	where $n-3=6p''+d''$.
	Since $n=6p+d$, we have
	$$\reg\left(\frac{S}{NI(P^2_{n-3})}\right)=\begin{cases}
		4p-2,&d\in\{0,1\}\\
		4p-1,&d=2\\
		4p,&d\in\{3,4\}\\
		4p+1,&d=5
	\end{cases}$$
	and $$\projdim\left(\frac{S}{NI(P^2_{n-3})}\right)=\begin{cases}
		2p,&d\in\{1,2,3\}\\
		2p-1,&d=0\\
		2p+1,&d\in\{4,5\}
	\end{cases}$$
	Returning to $$\reg\left(\frac{S}{NI(P^2_{n})}\right)=$$
	$$=\max\left\{\reg\left(\frac{S}{(I_5(P_{\{2,\ldots,n-1\}}),x_1x_2x_3):(x_{n-2}x_{n-1}x_n)}\right)+2,\reg\left(\frac{S}{(I_5(P_{\{2,\ldots,n-1\}}),x_1x_2x_3)}\right)\right\}$$
	we obtain $$\reg\left(\frac{S}{NI(P^2_{n})}\right)=\begin{cases}
		4p,&d\in\{0,1\}\\
		4p+1,&d=2\\
		4p+2,&d\in\{3,4\}\\
		4p+3,&d=5
	\end{cases}$$
	Moreover, by $$\projdim\left(\frac{S}{NI(P^2_{n})}\right)=$$
	$$=\max\left\{\projdim\left(\frac{S}{(I_5(P_{\{2,\ldots,n-1\}}),x_1x_2x_3):(x_{n-2}x_{n-1}x_n)}\right)+1,\projdim\left(\frac{S}{(I_5(P_{\{2,\ldots,n-1\}}),x_1x_2x_3)}\right)\right\}$$
	we obtain $$\projdim\left(\frac{S}{NI(P^2_{n})}\right)=\begin{cases}
		2p,&d=0\\
		2p+1,&d\in\{1,2,3\}\\
		2p+2,&d\in\{4,5\}
	\end{cases}$$
	
	For the cases $n\in\{7,8,9\}$ one may use Singular to check the formulae.   
\end{proof}

An obvious consequence is the following:
\begin{Corollary}\label{depth-reg} Let $NI(P^2_{n})$ be the closed neighborhood ideal of the square of the path graph $P_n$. Then
	$$\reg\left(\frac{S}{NI(P^2_{n})}\right)=\depth\left(\frac{S}{NI(P^2_{n})}\right)=\begin{cases}
		4p,&d\in\{0,1\}\\
		4p+1,&d=2\\
		4p+2,&d\in\{3,4\}\\
		4p+3,&d=5
	\end{cases}$$
\end{Corollary}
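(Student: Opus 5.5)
The plan is to get the depth from Theorem \ref{pdreg} through the Auslander--Buchsbaum formula and then compare it with the regularity formula. Since $S=k[x_1,\ldots,x_n]$ is a polynomial ring and $S/NI(P_n^2)$ is a finitely generated graded $S$-module, we have
$$\depth\left(\frac{S}{NI(P^2_{n})}\right)=n-\projdim\left(\frac{S}{NI(P^2_{n})}\right).$$
So the first step is to write $n=6p+d$ and substitute the projective dimension from Theorem \ref{pdreg}, one residue class $d\in\{0,\ldots,5\}$ at a time.

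For $d=0$ this gives $6p-2p=4p$. For $d=1,2,3$ it gives $6p+d-(2p+1)=4p+d-1$, that is, $4p$, $4p+1$ and $4p+2$. For $d=4,5$ it gives $6p+d-(2p+2)=4p+d-2$, that is, $4p+2$ and $4p+3$. The second step is to compare these with the regularity formula of Theorem \ref{pdreg}: in every residue class the two values coincide, which is exactly the displayed formula.

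For small $n$ (the cases $3\le n\le 6$), I will check directly against Proposition \ref{n<=6}, since Theorem \ref{pdreg} already covers these cases through that proposition. There is no real obstacle; the only care needed is the arithmetic in each residue class. One should note that the printed case (iv) of Proposition \ref{n<=6} (projective dimension $2$, $n=6$, $p=1$, $d=0$) is consistent with the formula $2p$. For (ii), Auslander--Buchsbaum gives depth $2$, while the regularity formula with $n=4$ ($p=0$, $d=4$) gives $2$, so these agree as well.
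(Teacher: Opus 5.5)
Your proposal is correct and matches the paper's argument: the paper states this corollary as an obvious consequence of Theorem \ref{pdreg}, i.e.\ Auslander--Buchsbaum $\depth(S/NI(P_n^2))=n-\projdim(S/NI(P_n^2))$ followed by a residue-class comparison with the regularity formula, and your arithmetic for each $d\in\{0,\ldots,5\}$ is right. Your separate check of $3\le n\le 6$ is harmless but redundant, since Theorem \ref{pdreg} already covers those $n$ through Proposition \ref{n<=6}.
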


We are now able to characterize the closed neighborhood ideals which are Cohen--Macaulay:
\begin{Proposition}\label{CM} Let $NI(P^2_{n})$ be the closed neighborhood ideal of the square of the path graph $P_n$. Then
	$NI(P^2_{n})$ is Cohen--Macaulay if and only if $n\in\{1,6\}.$
\end{Proposition}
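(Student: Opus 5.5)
The plan is to use the standard criterion that $S/I$ is Cohen--Macaulay if and only if $\depth(S/I)=\dim(S/I)$. By the Auslander--Buchsbaum formula, $\depth(S/I)=n-\projdim(S/I)$, and $\dim(S/I)=n-\height(I)$. So $NI(P_n^2)$ is Cohen--Macaulay exactly when
$$\projdim\left(\frac{S}{NI(P_n^2)}\right)=\height(NI(P_n^2)).$$
Both sides are already known. The height is $\lceil n/5\rceil$ by Proposition \ref{height}. The projective dimension is given by Proposition \ref{n<=6} for $n\leq 6$ and by Theorem \ref{pdreg} for $n\geq 7$. The proof therefore reduces to comparing two explicit arithmetic functions of $n$.

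First I treat the small cases directly. For $n=6$, Proposition \ref{n<=6}(iv) gives $\projdim=\height=2$; indeed $NI(P_6^2)=(x_1x_2x_3,x_4x_5x_6)$ is a complete intersection, hence Cohen--Macaulay. For $n=4$ and $n=5$ one has $\projdim=2>1=\height$, so these are not Cohen--Macaulay. For $n=1$ the ideal is $(x_1)$, which is Cohen--Macaulay. The degenerate cases $n=2,3$ give principal ideals $(x_1x_2)$ and $(x_1x_2x_3)$, where $\projdim=\height=1$. I would check these against the statement's convention, presumably that $n=1$ stands in for the principal, degenerate situations. If the intended range is $n\geq 3$, the principal case $n=3$ must be listed along with $6$; I would flag and fix this when writing the final version.

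For $n\geq 7$ I write $n=6p+d$ with $p\geq 1$ and $0\leq d\leq 5$. I show the strict inequality $\projdim>\height$, which rules out the Cohen--Macaulay property.

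1. Theorem \ref{pdreg} gives $\projdim\geq 2p$ in all cases, and $\projdim\geq 2p+1$ when $d\neq 0$.
2. On the other side, $\height=\lceil (6p+d)/5\rceil\leq (6p+d+4)/5$.
3. For $p\geq 3$ this is strictly less than $2p$, since $10p>6p+d+4$ whenever $4p>d+4$, which holds for $p\geq 3$.
4. The remaining values $7\leq n\leq 17$ (that is, $p=1,2$) I check by hand from the tables:
   - For $n=7,\dots,11$ the projective dimensions are $3,3,3,4,4$ and the heights are $2,2,2,2,3$.
   - For $n=12,\dots,17$ the projective dimensions are $4,5,5,5,6,6$ and the heights are $3,3,3,3,4,4$.

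In every case the projective dimension strictly exceeds the height.

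The argument is essentially bookkeeping once Theorem \ref{pdreg} and Proposition \ref{height} are in hand. The only delicate point is the boundary range where the crude estimate in step 3 fails, namely $p\leq 2$. This is handled by the explicit finite table in step 4, together with careful attention to which small $n$ the statement is meant to include.
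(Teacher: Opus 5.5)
Your proof is correct and follows the paper's route. Both reduce Cohen--Macaulayness to $\projdim(S/NI(P_n^2))=\height(NI(P_n^2))$ and compare Proposition \ref{height} with Theorem \ref{pdreg}; the paper solves $5q+r=6p+d$ residue by residue, while you use the estimate $\lceil n/5\rceil<2p$ for $p\geq 3$ plus a finite table.

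Your flag about small $n$ is justified and is not a defect of your argument. $NI(P_2^2)=(x_1x_2)$ and $NI(P_3^2)=(x_1x_2x_3)$ are principal, hence Cohen--Macaulay. The paper's own case analysis actually produces them: the cases $r=d=2$ and $r=d=3$ with $q=0$ give $n=2$ and $n=3$, but the paper misreports both as $n=1$. So the correct answer is $n\in\{1,2,3,6\}$, or $\{3,6\}$ if one restricts to $n\geq 3$.
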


\begin{proof}
	Let $n=5q+r=6p+d$, $0\leq r\leq 4$ and $0\leq d\leq 5$.
	\begin{itemize}
		\item For $r=0$ we have $\height(NI(P^2_{n}))=q$. 
		\begin{itemize}
			\item for $d=0$ we have $5q=6p$. In this case $NI(P^2_{n})$ is Cohen--Macaulay if and only if $q=2p$. Therefore $q=p=0$ and $n=0$.
			\item for $d\in\{1,2,3\}$ we have $5q=6p+d$. In this case $NI(P^2_{n})$ is Cohen--Macaulay if and only if $q=2p+1$.
			Therefore $4p=d-5$ and we obtain $p<0$.
			\item for $d\in\{4,5\}$ we have $5q=6p+d$. In this case $NI(P^2_{n})$ is Cohen--Macaulay if and only if $q=2p+2$.
			Therefore $4p=d-10$ and we obtain $p<0$.
		\end{itemize}
		\item For $r\neq 0$ we have $\height(NI(P^2_{n}))=q+1$.
		\begin{itemize}
			\item for $d=0$ we have $5q+r=6p$. In this case $NI(P^2_{n})$ is Cohen--Macaulay if and only if $q+1=2p$. We obtain that $2q=3-r$ and $q\in\mathbb N$ only for $r=1$. Hence it results $q=1$ and $n=6$.
			\item for $d\in\{1,2,3\}$ we have $5q+r=6p+d$. In this case $NI(P^2_{n})$ is Cohen--Macaulay if and only if $q+1=2p+1$.
			Therefore $q=2p$ and from $5q+r=6p+d$ we obtain $2q=d-r$. 
			\begin{itemize}
				\item For $r=1,\ d=1$ we obtain $q=0$ and $n=1$, for $r=1,\ d=2$ we have $q=\frac{1}{2}\notin \mathbb N$ and $r=1,\ d=2$ implies $q=1$ and is not even. 
				\item For $r=2,\ d=2$ we obtain $q=0$ and $n=1$, for $r=2,\ d=3$ we have $q=\frac{1}{2}\notin \mathbb N$ and $r=2,\ d=1$ implies $q<0$.
				\item For $r=3,\ d=3$ we obtain $q=0$ and $n=1$, for $r=1,\ d\in \{1,2\}$ we have $q<0$.
				\item For $r=4$ we get $q<0$.  
			\end{itemize} 
			\item for $d\in\{4,5\}$ we have $5q=6p+d$. In this case $NI(P^2_{n})$ is Cohen--Macaulay if and only if $q+1=2p+2$.
			Therefore $q=2p+1$ and from $5q=6p+d$ we obtain $4p=d-r-5$. Since $1\leq r\leq 4$ we get $d-r-5<0$, contradiction.
		\end{itemize}
	\end{itemize}
\end{proof}

\section{The closed neighborhood ideal of $P_n^2$ and simplicial complexes}

We pay attention now to simplicial complexes in order to determine some other properties of the closed neighborhood ideals. As before, we start by recalling the notions and results that will be used through this section (for more details, please check \cite{BH} or \cite{V}). 

	\textit{A simplicial complex} $\Delta$ on the vertex set $\{x_1,\ldots, x_n\}$ is a collection of subsets (called \textit{faces}) such that any vertex is in $\Delta$ and, if $F$ is a face of $ \Delta$ and $G\subset F$, then $G$ is also a face of $\Delta$.\textit{ The set of facets of} $\Delta$ is denoted by $\mathcal{F}(\Delta)$. \textit{The dimension of a face} $F$ si equal to $|F|-1$. The dimension of $\Delta$ is the maximum of the dimensions of all the faces. Let $f_i$ be the number of faces of dimension $i$ in $\Delta$. The $f$-vector of $\Delta$ is $f(\Delta)=(f_{-1},f_0,\ldots,f_d)$, where $d=\dim(\Delta)$ and $f_{-1}=1$. The\textit{ Euler characteristic of} $\Delta$ is denoted by $\chi(\Delta)$ and is defined as $\chi(\Delta)=\sum\limits_{i=0}^d(-1)^if_i$, while \textit{the reduced Euler characteristic} is $\overline{\chi}(\Delta)=-1+\chi(\Delta)$. A simplicial complex is\textit{ pure} if all its facets have the same dimension. A simplicial complex is \textit{(non-pure) shellable} if its facets can be ordered $F_1,\ldots,F_r$ such that for all $1\leq i<j\leq r$ there are a vertex $v\in F_j\setminus F_i$ and an integer $l<j$ such that $F_j\setminus F_l=\{v\}$ (\cite{BW}).   

Given a simplicial complex $\Delta$ one may associate several simplicial complexes on the same vertex set:
\begin{center}
	\begin{itemize}
		\item \textit{the Alexander dual} of $\Delta$: $\Delta^{\vee}=\{F: F^c\notin \Delta\}$
		
		\item \textit{the complement of} $\Delta$: $\mathcal{F}(\Delta^c)=\{F^c: F\in\mathcal{F}(\Delta)\}$.
\end{itemize}\end{center}

For a simplicial complex on the vertex set $\{x_1,\ldots,x_n\}$, let $S=k[x_1,\ldots,x_n]$ be the polynomial ring over the field $k$. We can consider the following squarefree monomial ideals:
\begin{center}
	\begin{itemize}
		\item \textit{The Stanley--Reisner ideal} of $\Delta$: $I_{\Delta}=\{\mathbf{x}_{F^c}: F\notin \Delta\}$
		
		\item \textit{The facet ideal} of $\Delta$: $I(\Delta^c)=(\mathbf{x}_F: F\in\mathcal{F}(\Delta)\}$.
		
		\item \textit{The complementary ideal }of $\Delta$: $I^c(\Delta)=\left(\frac{x_1\cdots x_n}{\mathbf{x}_F}: F\in\mathcal{F}(\Delta)\right)$.
\end{itemize}\end{center}

Conversely, let $I\subseteq S=k[x_1,\ldots,x_n]$ be a squarefree monomial ideal. One may associate to $I$ several simplicial complexes on the set of vertices $\{x_1,\ldots,x_n\}$:

\begin{center}
	\begin{itemize}
		\item \textit{The Stanley--Reisner simplicial complex} $SR(I)$ as being the simplicial complex such that $I=I_{SR(I)}$.
		\item If $\mathcal{G}(I)=\{\mathbf{x}_{F_1},\ldots, \mathbf{x}_{F_r}\}$, where $F_i\subseteq\{x_1,\ldots,x_n\}$ for all $i$, then one may consider the simplicial complex $\Delta$ whose facets are $F_1,\ldots,F_r$, that is $I=I(\Delta)$.
	\end{itemize}
\end{center}
A simplicial complex $\Delta$ is called (sequentially) Cohen--Macaulay if it's Stanley--Reisner ring $k[\Delta]=S/I_{\Delta}$ is (sequentially) Cohen--Macaulay. A monomial ideal \textit{has linear quotients} if its minimal monomial generators $m_1,\ldots, m_r$ can be ordered such that for all $1\leq i\leq r-1$, the ideal $(m_1\ldots,m_i):m_{i+1}$ is generated by linear forms.

\begin{Theorem}\label{shelllinquot}The following hold:
	\begin{itemize}
		\item[(i)] If $\Delta$ is a  non-pure shellable simplicial complex, then $k[\Delta]$ is sequentially Cohen--Macaulay. $($\cite{St}, \cite[Theorem 6.3.27]{V}$)$
		\item[(ii)] A simplicial complex $\Delta$ is shellable if and only if $I_{\Delta^{\vee}}$ has linear quotients. $($\cite[Theorem 1.4]{HHZ}$)$
	\end{itemize}
	
\end{Theorem}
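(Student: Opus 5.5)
Both parts are classical results cited from the literature, so my plan is to reconstruct the standard arguments. I will prove (ii) first and then deduce (i) from it via Alexander duality.

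\emph{Part (ii).} Since the minimal nonfaces of $\Delta^{\vee}$ are exactly the complements of the facets of $\Delta$, we have
$$\mathcal{G}(I_{\Delta^{\vee}})=\{\mathbf{x}_{F^c}: F\in\mathcal{F}(\Delta)\}.$$
These monomials are minimal generators because facets are pairwise incomparable. Fix an ordering $F_1,\ldots,F_r$ of the facets and the corresponding ordering of the generators. The key computation is the colon of two squarefree monomials,
$$(\mathbf{x}_{F_i^c}):\mathbf{x}_{F_j^c}=\left(\mathbf{x}_{F_i^c\setminus F_j^c}\right)=\left(\mathbf{x}_{F_j\setminus F_i}\right).$$
Hence
$$(\mathbf{x}_{F_1^c},\ldots,\mathbf{x}_{F_{j-1}^c}):\mathbf{x}_{F_j^c}=\left(\mathbf{x}_{F_j\setminus F_i}: i<j\right).$$
A monomial ideal is generated by variables if and only if each of its listed generators is divisible by a variable that itself lies in the ideal. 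The variables in this colon ideal are exactly the $x_v$ with $\{v\}=F_j\setminus F_l$ for some $l<j$. So the linear quotient condition at step $j$ reads: for every $i<j$ there are $v\in F_j\setminus F_i$ and $l<j$ with $F_j\setminus F_l=\{v\}$. This is precisely the (non-pure) shelling condition recalled above, so the same ordering works in both directions.

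\emph{Part (i).} First I would use the Björner--Wachs rearrangement lemma: any shelling of a non-pure complex can be reordered so that facets appear in weakly decreasing dimension and it is still a shelling. Under (ii), this reordering makes the generators $\mathbf{x}_{F^c}$ of $I_{\Delta^{\vee}}$ appear in weakly increasing degree, and they still have linear quotients. Next, the standard fact that an ideal with linear quotients in degree-increasing order is componentwise linear (Herzog--Hibi; Jahan--Zheng) shows that $I_{\Delta^{\vee}}$ is componentwise linear. Finally, the non-pure Eagon--Reiner theorem of Herzog--Hibi states that $I_{\Delta^{\vee}}$ is componentwise linear if and only if $k[\Delta]$ is sequentially Cohen--Macaulay, which concludes (i).

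An alternative route for (i) goes through Duval's characterization: $\Delta$ is sequentially Cohen--Macaulay if and only if every pure skeleton $\Delta^{[i]}$ is Cohen--Macaulay. Each $\Delta^{[i]}$ is shellable when $\Delta$ is (Björner--Wachs), and pure shellable complexes are Cohen--Macaulay.

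The main obstacle is the rearrangement step: one must check that moving higher-dimensional facets earlier preserves the shelling condition. If that step becomes awkward, I would fall back on the skeleton argument via Duval and simply cite the Björner--Wachs skeleton result.
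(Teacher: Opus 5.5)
Your argument is correct. The paper gives no proof of this theorem: it quotes (i) from Stanley and Villarreal and (ii) from Herzog--Hibi--Zheng, so the only comparison available is with those sources.

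Your part (ii) is exactly the Herzog--Hibi--Zheng computation. The ideal $I_{\Delta^{\vee}}$ is generated by the monomials $\mathbf{x}_{F^c}$, and the identity $(\mathbf{x}_{F_i^c}):\mathbf{x}_{F_j^c}=(\mathbf{x}_{F_j\setminus F_i})$ turns the linear-quotient condition for a given ordering into the Bj\"orner--Wachs shelling condition for the same ordering.

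For (i) you derive the result from (ii) via degree-increasing linear quotients, componentwise linearity and the Herzog--Hibi non-pure Eagon--Reiner theorem, with Duval's skeleton criterion as a fallback. Both are standard proofs, but they still rest on deep cited results, so they unpack the reference rather than replace it.

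The step you flag as the main obstacle is harmless: it is the Bj\"orner--Wachs Rearrangement Lemma, which you can cite. It also follows quickly by adjacent swaps. Suppose $|F_i|<|F_{i+1}|$; then exchanging $F_i$ and $F_{i+1}$ preserves the shelling, for two reasons.
\begin{itemize}
\item A facet smaller than $F_{i+1}$ can never be an $F_l$ with $|F_{i+1}\setminus F_l|=1$, since that would force $F_l\subseteq F_{i+1}$. So $F_{i+1}$ loses nothing by losing $F_i$ as a predecessor.
\item For the new pair, the old condition at $F_{i+1}$ against $F_i$ gives some $l<i$ with $F_i\cap F_{i+1}\subseteq F_l$. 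The old condition for $F_i$ against $F_l$ then gives $v\in F_i\setminus F_l\subseteq F_i\setminus F_{i+1}$ and $l'<i$ with $F_i\setminus F_{l'}=\{v\}$, as required.
\end{itemize}
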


Throughout this section, we will assume that $n\geq7$.

Let $\mathcal{NI}[P_n^2]$ be the simplicial complex with the vertex set $\{x_1,\ldots,x_n\}$ such that $I(\mathcal{NI}[P_n^2])=NI(P_n^2)$, $n\geq7$. More precisely, the facets of the simplicial complex $\mathcal{NI}[P_n^2]$ are $ F_1=\{x_1,x_2,x_3\},F_{n-4}=\{x_{n-2},x_{n-1},x_n\}, F_i=\{x_{i},x_{i+1},x_{i+2},x_{i+3},x_{i+4}\}$ for all $ 2\leq i\leq n-5.$ 

\begin{Proposition}\label{shellable}{\rm{(The shellability)}}
	For $n\geq 7$, the simplicial complex $\mathcal{NI}[P_n^2]$ is shellable.
\end{Proposition}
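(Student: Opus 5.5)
The plan is to exhibit an explicit shelling order of the facets and check the (non-pure) shellability condition of \cite{BW} directly. First, note that the sets $F_1,\ldots,F_{n-4}$ listed above are indeed the facets of $\mathcal{NI}[P_n^2]$: none is contained in another, since $(*)$ is the minimal generating set of $NI(P_n^2)$. For $n\geq 7$ we have $n-5\geq 2$, so at least one five-element facet $F_2$ exists.

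The natural order $F_1,F_2,\ldots,F_{n-4}$ does not work. The only facet preceding $F_2$ would be $F_1$, and $F_2\setminus F_1=\{x_4,x_5,x_6\}$ is not a single vertex. So I begin with the five-element facets and add the two three-element facets at the end, using the order
$$F_2,F_3,\ldots,F_{n-5},F_1,F_{n-4}.$$

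I will then verify the condition for each facet $F_j$ in this order, against every facet $F_i$ preceding it.

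\emph{The facets $F_j$ with $3\leq j\leq n-5$.} Take $v=x_{j+4}$ and $l=j-1$. Then $F_j\setminus F_{j-1}=\{x_{j+4}\}$. Every earlier facet $F_i$, with $2\leq i<j$, ends at $x_{i+4}$, and $i+4<j+4$. Hence $v\notin F_i$, so $v\in F_j\setminus F_i$.

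\emph{The facet $F_1$.} Take $v=x_1$ and $l=2$. Then $F_1\setminus F_2=\{x_1\}$. Every earlier facet $F_i$ ($2\leq i\leq n-5$) starts at $x_i$ with $i\geq 2$, so $x_1\in F_1\setminus F_i$.

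\emph{The facet $F_{n-4}$.} Take $v=x_n$ and $l=n-5$. Then $F_{n-4}\setminus F_{n-5}=\{x_{n-2},x_{n-1},x_n\}\setminus\{x_{n-5},\ldots,x_{n-1}\}=\{x_n\}$. The vertex $x_n$ does not lie in any $F_i$ with $2\leq i\leq n-5$, since these end at $x_{i+4}\leq x_{n-1}$. It also does not lie in $F_1$, because $n\geq 7>3$. So $x_n\in F_{n-4}\setminus F_i$ for all earlier $F_i$.

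This gives the shelling. There is no real obstacle here; the only subtle point is choosing the order. Placing $F_1$ first fails, as shown above, while placing the three-element facets last makes each verification immediate. The hypothesis $n\geq 7$ is used in two places: it guarantees that $F_2$ exists and can serve as the reference facet for $F_1$, and it ensures that $x_n\notin F_1$.
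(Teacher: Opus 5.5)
Your proof is correct and follows the paper's argument: it uses the same shelling order $F_2,\ldots,F_{n-5},F_1,F_{n-4}$ and the same witnesses $F_j\setminus F_{j-1}=\{x_{j+4}\}$, $F_1\setminus F_2=\{x_1\}$, and $F_{n-4}\setminus F_{n-5}=\{x_n\}$. You are slightly more explicit than the paper in checking that each witness vertex avoids every earlier facet (for example, $x_n\notin F_1$).
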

\begin{proof}
	We will prove that $\mathcal{NI}[P_n^2]$ is shellable with respect to the order $F_2,\ldots, F_{n-5}$, $F_1,F_{n-4}$.
	
	Let $2\leq i<j\leq n-5$ be two integers. Then there are $x_{j+4}\in F_{j}\setminus F_i$ and $l=j-1<j$ such that $F_j\setminus F_{j-1}=\{x_{j+4}\}$.
	
	For $j=1$ and for all $i\in\{2,\ldots,n-5\}$ there are $x_1\in F_1\setminus F_i$ and $l=2$ such that $F_1\setminus F_2=\{x_1\}$.

	Finally, for $j=n-4$ and for all $i\in\{1,2,\ldots,n-5\}$ there are $x_{n}\in F_{n-4}\setminus F_i$ and $l=n-5$ such that $F_{n-4}\setminus F_{n-5}=\{x_n\}$.
\end{proof}
We recall that a vertex of a simplicial complex $\Delta$ is called \textit{a free vertex} if it belongs to exactly one facet of $\Delta$.
According to \cite[Definition 4.1]{Z1} a simplicial complex $\Delta$\textit{ has the free vertex property} if it satisfies the following conditions:
\begin{enumerate}
	\item $\Delta$ is a simplex, or
	\item $\Delta$ has a free vertex $x$ such that both $\Delta\setminus\langle F\rangle$ and $\Delta\setminus x$ also have free vertices, where $F$ is the only facet of $\Delta$ containing $x$, $\Delta\setminus\langle F\rangle$ is the simplicial complex obtained by removing the facet $F$ from $\Delta$, and $\Delta \setminus x= \{G\in \Delta: x\notin G\}.$
\end{enumerate}  

\begin{Proposition}\label{freevertex}{\rm{(The free vertex property)}}
	The complex $\mathcal{NI}[P_n^2]$ has the free vertex property.
\end{Proposition}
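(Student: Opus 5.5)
Since $n\geq 7$, the complex $\Delta=\mathcal{NI}[P_n^2]$ is not a simplex: it has at least the three facets $F_1$, $F_2$, $F_{n-4}$. So I will verify condition (2) of \cite[Definition 4.1]{Z1} directly.

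I choose the free vertex $x=x_1$. The only facet containing it is $F_1=\{x_1,x_2,x_3\}$, because every other facet $F_i$ has smallest vertex $x_i$ with $i\geq 2$. It then suffices to exhibit a free vertex in each of $\Delta\setminus\langle F_1\rangle$ and $\Delta\setminus x_1$. In both complexes the vertex $x_n$ works whenever $n\geq 7$. Indeed, $x_n$ lies only in $F_{n-4}=\{x_{n-2},x_{n-1},x_n\}$, since every other facet has largest vertex at most $x_{n-1}$. Neither operation touches $F_{n-4}$, as $x_1\notin F_{n-4}$ when $n\geq 4$.

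The one point needing care is to identify the facets of $\Delta\setminus x_1$. Its faces are the faces of $\Delta$ not containing $x_1$, so its facets are the maximal sets among
$$\{x_2,x_3\},\ F_2,\ldots,F_{n-5},\ F_{n-4}.$$
Here $\{x_2,x_3\}\subseteq F_2$, so $\{x_2,x_3\}$ is not a facet, and the facets of $\Delta\setminus x_1$ are exactly $F_2,\ldots,F_{n-4}$. In particular $F_{n-4}$ remains a facet and is the unique facet containing $x_n$. Similarly, the facets of $\Delta\setminus\langle F_1\rangle$ are $F_2,\ldots,F_{n-4}$, and again $x_n$ is free.

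If the definition is meant recursively, so that the two smaller complexes must themselves have the free vertex property, I would strengthen the statement to an induction. The claim would be that every complex whose facets are the windows $\{x_i,\ldots,x_{i+4}\}$, $a\leq i\leq b$, possibly together with the end triangles $F_1$ and/or $F_{n-4}$, has the free vertex property. The inductive step always deletes the leftmost facet or its leftmost free vertex, and the leftmost and rightmost vertices are free at every stage. Deleting a vertex can leave a sub-face such as $\{x_{i+1},\ldots,x_{i+4}\}$, but it is contained in the next window, so the family stays of the same shape. The base case is a single facet, which is a simplex. I expect this bookkeeping of facets after vertex deletion to be the only real obstacle; everything else is immediate from the explicit list of facets.
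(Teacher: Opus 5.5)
Your main argument is the paper's own proof. You take $x_1$ as the free vertex in $F_1$, observe that $\mathcal{NI}[P_n^2]\setminus\langle F_1\rangle$ and $\mathcal{NI}[P_n^2]\setminus x_1$ both equal $\langle F_2,\ldots,F_{n-4}\rangle$, and note that $x_n$ is free there; your justification via $\{x_2,x_3\}\subseteq F_2$ is a step the paper leaves implicit.

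Your recursive strengthening goes beyond the paper and is worth keeping, since the condition has to be read recursively for Theorem~\ref{SCM} to hold. There is one slip in its bookkeeping. At the last window, deleting $x_{n-5}$ from $\langle F_{n-5},F_{n-4}\rangle$ leaves the facet $\{x_{n-4},\ldots,x_{n-1}\}$, which lies in no window, so your family is not closed at that step. The fix is to use the free vertex $x_n$ at that stage instead: both resulting complexes are then the simplex $\langle F_{n-5}\rangle$.
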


\begin{proof}
	Indeed, one may easily note that $x_1\in F_1$ is a free vertex. The statement follows since both $\mathcal{NI}[P_n^2]\setminus\langle F_1\rangle=\langle F_2,\ldots, F_{n-4}\rangle$ and $\mathcal{NI}[P_n^2]\setminus x_1=\langle F_2,\ldots, F_{n-4}\rangle$ have $x_n$ as free vertex.
\end{proof}
\begin{Theorem}\cite[Theorem 4.8]{Z1}\label{shell}
	Let $\Delta$ be a simplicial complex, $I(\Delta)$ its facet ideal and $SR(\Delta)$ the simplicial complex with the property that $I(\Delta)=I_{SR(\Delta)}$. If $\Delta$ has the free vertex property, then $SR(\Delta)$ is shellable.
\end{Theorem}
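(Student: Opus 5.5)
The plan is to pass through Alexander duality and reduce shellability to a linear quotients statement, then induct on the number of facets of $\Delta$ using the free vertex. By construction $I_{SR(\Delta)}=I(\Delta)$, and the Stanley--Reisner ideal of the Alexander dual $SR(\Delta)^{\vee}$ is the Alexander dual ideal
$$J(\Delta)=I(\Delta)^{\vee}=\bigcap_{G\in\mathcal F(\Delta)}(x_v: v\in G).$$
This is the \emph{cover ideal} of $\Delta$. It is generated by the monomials $\mathbf{x}_C$, where $C$ runs over the minimal vertex covers (transversals) of the facets of $\Delta$. By Theorem \ref{shelllinquot}(ii), it therefore suffices to prove that $J(\Delta)$ has linear quotients.

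We argue by induction on the number of facets. If $\Delta$ is a simplex with facet $F$, then $J(\Delta)=(x_v:v\in F)$ is generated by variables and trivially has linear quotients. Otherwise, let $x$ be a free vertex and $F$ the unique facet containing it. Put $J'=J(\Delta\setminus\langle F\rangle)$, and note that $x\notin\supp(J')$. Then
$$J(\Delta)=(x_v:v\in F)\cap J'=x\,J'+\big((x_v:v\in F\setminus\{x\})\cap J'\big),$$
and the second summand is exactly $J(\Delta\setminus x)$, since the facets of $\Delta\setminus x$ are those of $\Delta\setminus\langle F\rangle$ together with $F\setminus\{x\}$ when the latter is maximal. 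Both $\Delta\setminus\langle F\rangle$ and $\Delta\setminus x$ have free vertices by hypothesis. I would read the definition of the free vertex property recursively, so that the induction hypothesis gives linear quotient orders $u'_1,\dots,u'_a$ for the minimal generators of $J(\Delta\setminus x)$ and $u_1,\dots,u_b$ for those of $J'$.

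The proposed order for the minimal generators of $J(\Delta)$ is $u'_1,\dots,u'_a$, followed by those $xu_j$ that are minimal generators, i.e.\ those with $u_j$ not divisible by any $x_v$, $v\in F\setminus\{x\}$. On the first block the colon ideals are the old ones, hence linear. For a generator $xu_j$ of the second block, the colon ideal splits as
$$(u'_1,\dots,u'_a):xu_j+(xu_{j_1},\dots):xu_j .$$
Since $x$ divides no $u'_i$, the first term equals $J(\Delta\setminus x):u_j$. Because $u_j\in J'$ and $u_j$ has no variable of $F\setminus\{x\}$, this colon should be exactly $(x_v:v\in F\setminus\{x\})$: a monomial $m$ lies in it iff $mu_j\in(x_v:v\in F\setminus\{x\})$, iff $m$ contains such a variable. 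The second term is $(u_{j_1},\dots):u_j$. It is linear by induction, provided the earlier elements of the $J'$-order that were dropped (those divisible by a variable of $F\setminus\{x\}$) contribute colons already absorbed by $(x_v:v\in F\setminus\{x\})$. A sum of ideals generated by variables is generated by variables, so linear quotients follow.

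The main obstacle is this last bookkeeping step: the linear quotient order on $J'$ must restrict well after discarding the non-minimal $xu_j$. I would handle it by checking directly that for a discarded $u_i$ preceding $u_j$, the colon $u_i:u_j$ is generated by a monomial divisible by some $x_v$ with $v\in F\setminus\{x\}$, hence lies in the first term. A secondary issue is to make explicit that the free vertex property is inherited by $\Delta\setminus\langle F\rangle$ and $\Delta\setminus x$, which is how the definition from \cite{Z1} is meant to be applied recursively.
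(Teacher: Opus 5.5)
The paper gives no proof of this statement; it is quoted from Zheng. So your argument has to stand on its own, and most of it does. The splitting $J(\Delta)=xJ'+(P'\cap J')$ with $P'=(x_v:v\in F\setminus\{x\})$ is correct. So are the description of the minimal generators and the computation $(P'\cap J'):xu_j=P'$. The bookkeeping step you flagged is harmless: a discarded $u_i$ is divisible by some $x_v$ with $v\in F\setminus\{x\}$, while $u_j$ is not, so $u_i:u_j\in P'$.

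\textbf{The gap.} It is the sentence ``the second summand is exactly $J(\Delta\setminus x)$''. With $\Delta\setminus x=\{G\in\Delta:x\notin G\}$ as in the paper, this holds only when $F\setminus\{x\}$ lies in no other facet. Suppose instead $F\setminus\{x\}\subseteq G$ for some facet $G\neq F$. Then $\Delta\setminus x=\Delta\setminus\langle F\rangle$, so $J(\Delta\setminus x)=J'$. But $P'\cap J'$ is the cover ideal of a different complex: the one obtained from $\Delta\setminus\langle F\rangle$ by replacing every facet containing $F\setminus\{x\}$ with $F\setminus\{x\}$. For $\Delta=\langle\{x,a,b\},\{a,b,c\}\rangle$ you get $J'=(a,b,c)$, but $P'\cap J'=(a,b)$ and $J(\Delta)=(a,b,xc)$. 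Your order would therefore start with $c\notin J(\Delta)$. In this case the hypothesis tells you nothing about $P'\cap J'$.

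\textbf{The statement is false as written.} The gap cannot be repaired, because with the paper's definitions the theorem fails. Let
$$\Delta=\langle\{a,b,x\},\{a,b,c,d\},\{b,c,y_2\},\{c,d,y_3\},\{a,d,y_4\}\rangle .$$
\begin{itemize}
\item The vertices $x,y_2,y_3,y_4$ are free. Deleting any of them shrinks its facet to a subset of $\{a,b,c,d\}$, so at every step removing the facet and deleting the vertex give the same complex.
\item Using $x,y_2,y_3,y_4$ in turn ends at the simplex $\langle\{a,b,c,d\}\rangle$. Hence $\Delta$ has the free vertex property, even in the recursive sense.
\item The minimal generators of $J(\Delta)$ not divisible by $x,y_2,y_3,y_4$ are $ac$ and $bd$.
\item Linear quotients pass to the ideal generated by the minimal generators not divisible by a fixed variable. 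Restrict the order; any variable $x_k\neq x$ arising as a colon $u_i:u_j$ with $x\nmid u_j$ comes from some $u_i$ not divisible by $x$.
\item Since $(ac,bd)$ has no linear quotients, $J(\Delta)$ has none, and $SR(\Delta)$ is not shellable.
\end{itemize}
Geometrically, the link of $\{x,y_2,y_3,y_4\}$ in $SR(\Delta)$ is the two disjoint edges $\{a,c\}$ and $\{b,d\}$.

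\textbf{What your argument does prove.} It proves the version in which $\Delta\setminus x$ is the complex whose facets are the minimal elements of $\{G\setminus\{x\}:G\in\mathcal F(\Delta)\}$, read recursively. Equivalently, $I(\Delta\setminus x)$ is $I(\Delta)$ with $x$ set to $1$. With that definition $P'\cap J'=J(\Delta\setminus x)$ always holds, and your induction on the number of vertices plus facets closes.
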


 The Stanley--Reisner complex of $NI(G)$ is\textit{ the dominance complex of a graph $G$} which was studied by Matsushita and Wakatsuki \cite{MW}. We obtain that, for $P_n^2$, the dominance complex is sequentially Cohen--Macaulay Theorem \ref{SCM1}.
\begin{Theorem}\cite[Theorem 4.9]{Z1}\label{SCM}
	If the simplicial complex $\Delta$ on a vertex set $V$ has the free vertex property, then
	$S/I(\Delta)$ is sequentially Cohen--Macaulay.
\end{Theorem}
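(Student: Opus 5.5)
The plan is to deduce the statement directly from the two results already recorded above, without touching the free vertex property beyond what Theorem~\ref{shell} uses. Let $\Delta$ be a simplicial complex on $V$ with the free vertex property. I would pass from the facet ideal $I(\Delta)$ to a Stanley--Reisner ring, since shellability is a property of the Stanley--Reisner side.

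Since $I(\Delta)$ is a squarefree monomial ideal, there is a unique simplicial complex $SR(\Delta)$ on $V$ with $I_{SR(\Delta)}=I(\Delta)$. Its faces are the subsets of $V$ that contain no facet of $\Delta$. Consequently
$$S/I(\Delta)=S/I_{SR(\Delta)}=k[SR(\Delta)],$$
so the claim is exactly that $SR(\Delta)$ is sequentially Cohen--Macaulay in the sense defined above.

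Next I would invoke Theorem~\ref{shell}. Because $\Delta$ has the free vertex property, it gives that $SR(\Delta)$ is shellable. This shelling is in the non-pure sense of \cite{BW}, which is the notion used in this section, since $SR(\Delta)$ is in general not pure.

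Finally, Theorem~\ref{shelllinquot}(i) says that $k[\Gamma]$ is sequentially Cohen--Macaulay for any non-pure shellable complex $\Gamma$. Applying it with $\Gamma=SR(\Delta)$ gives that $S/I(\Delta)=k[SR(\Delta)]$ is sequentially Cohen--Macaulay, as required.

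The only real content is hidden in Theorem~\ref{shell}, which we may assume. If it had to be proved, I would argue by induction on the number of facets. Take a free vertex $x$ lying in the unique facet $F$. For any squarefree monomial ideal $J$ and variable $x$, the ideal $J$ splits as $x\,(J:x)+J|_{x=0}$, which here gives
$$I(\Delta)=x\cdot I(\Delta\setminus\langle F\rangle)':x+I(\Delta\setminus x).$$
One then glues shellings of $SR(\Delta\setminus\langle F\rangle)$ and $SR(\Delta\setminus x)$, or equivalently checks linear quotients for the Alexander dual via Theorem~\ref{shelllinquot}(ii). That gluing is where I expect the main obstacle to lie, in particular keeping the shelling order compatible across the two pieces. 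For the present statement, however, the two-step deduction above suffices.
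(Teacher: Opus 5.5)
Your argument is correct. The paper gives no proof of its own (it simply cites \cite[Theorem 4.9]{Z1}), and your route is the standard derivation: identify $S/I(\Delta)=S/I_{SR(\Delta)}=k[SR(\Delta)]$, get shellability of $SR(\Delta)$ from Theorem~\ref{shell}, and then apply Stanley's result, Theorem~\ref{shelllinquot}(i).

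Your closing sketch of Theorem~\ref{shell} is not needed, and its displayed splitting is wrong as written. With $\Delta\setminus x=\{G\in\Delta:x\notin G\}$ as the paper defines it, $I(\Delta\setminus x)$ contains $\mathbf{x}_{F\setminus\{x\}}\notin I(\Delta)$ whenever $F\setminus\{x\}$ lies in no other facet. The correct decomposition is $I(\Delta)=x\,(I(\Delta):x)+I(\Delta\setminus\langle F\rangle)$, where $I(\Delta):x=(\mathbf{x}_{F\setminus\{x\}})+I(\Delta\setminus\langle F\rangle)$. Nothing in your actual proof depends on this.
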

\begin{Theorem}\label{SCM1}{\rm{(Sequentially Cohen--Macaulay)}} The following statements hold:
	\begin{itemize}
		\item[(i)]  $S/NI(P_{n}^2)$ is sequentially Cohen--Macaulay.
		\item[(ii)] $S/I_{SR(\mathcal{NI}[P_n^2])}$ is sequentially Cohen--Macaulay.
	\end{itemize}

\end{Theorem}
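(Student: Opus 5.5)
The plan is to obtain both parts as immediate consequences of the free vertex property in Proposition \ref{freevertex}, combined with the results of Zheng quoted as Theorems \ref{shell} and \ref{SCM}. Throughout we take $n\geq 7$, so that $\mathcal{NI}[P_n^2]$ has the facets $F_1,\ldots,F_{n-4}$ listed above and $I(\mathcal{NI}[P_n^2])=NI(P_n^2)$.

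For (i), set $\Delta=\mathcal{NI}[P_n^2]$. By construction its facet ideal is $I(\Delta)=NI(P_n^2)$. Proposition \ref{freevertex} says that $\Delta$ has the free vertex property. Theorem \ref{SCM} then gives directly that $S/I(\Delta)=S/NI(P_n^2)$ is sequentially Cohen--Macaulay.

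For (ii), again use the free vertex property from Proposition \ref{freevertex}, now through Theorem \ref{shell}. This yields that $SR(\mathcal{NI}[P_n^2])$, the complex whose Stanley--Reisner ideal is $NI(P_n^2)$, is (non-pure) shellable. Theorem \ref{shelllinquot}(i) then says that its Stanley--Reisner ring $S/I_{SR(\mathcal{NI}[P_n^2])}$ is sequentially Cohen--Macaulay.

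Note that $I_{SR(\mathcal{NI}[P_n^2])}=NI(P_n^2)$, so (ii) is the same ring as (i). This gives a second, independent route to the same conclusion, and either chain of citations suffices.

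The only point needing care is the verification in Proposition \ref{freevertex}, which is where any real content lies. It relies on $x_1$ being a free vertex of $\Delta$ and on $x_n$ being a free vertex of $\Delta\setminus\langle F_1\rangle$ and of $\Delta\setminus x_1$. That proposition is taken as given here, but it should be checked against the exact form of Zheng's definition. If the definition is recursive, one has to check that the free vertex property persists in the subcomplexes. This works by repeatedly peeling off the endpoint free vertices $x_1$ and $x_n$ (or $x_i$ at the left end of the remaining interval of facets) in the chain-like facet structure.

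For small $n\leq 6$ the ideals are those of Proposition \ref{n<=6}: a principal ideal, $x_2x_3(x_1,x_4)$, $x_3(x_1x_2,x_4x_5)$, and a complete intersection. For these, sequential Cohen--Macaulayness can be checked by hand, each being a facet ideal of a complex with at most two facets, which trivially has the free vertex property.
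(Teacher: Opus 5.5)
Your proposal is correct and matches the paper's proof. Part (i) is Theorem \ref{SCM} applied to $\mathcal{NI}[P_n^2]$ via Proposition \ref{freevertex}, and part (ii) is Theorem \ref{shelllinquot}(i) applied to $SR(\mathcal{NI}[P_n^2])$, whose shellability you correctly (and more explicitly than the paper) derive from Theorem \ref{shell}. Your extra remarks are accurate refinements rather than a different route: (ii) concerns the same ring as (i), and the free vertex property needs a recursive check, which the interval structure of the facets makes routine.
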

\begin{proof}
	(i) The statement follows easy by Theorem \ref{SCM}, Proposition \ref{freevertex} and by using the fact that $I(\mathcal{NI}[P_n^2])=NI(P_{n}^2)$.
	
	(ii) This follows directly from Theorem \ref{shelllinquot}(i)
\end{proof}
The following result will help us to compute the big height of the closed neghborhood ideal. We recall that the\textit{ big height} of a monomial ideal $I$ is denoted by $\bight(I)$ and is given by the maximal height of the minimal associated primes of $I$. 

\begin{Lemma}\label{SCMbh}\cite[Corollary 6.4.20]{V},\cite[Corollary 3.33]{MV} Let $I$ be a squarefree monomial ideal of $S$ and $S/I$ is sequentially Cohen--Macaulay. Then $\projdim (S/I)=\bight(I)$.
	\end{Lemma}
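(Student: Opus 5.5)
The plan is to go through depth and the Auslander--Buchsbaum formula. Write $S=k[x_1,\ldots,x_n]$ and $M=S/I$. Since $I$ is squarefree, $I$ is radical. Hence $\Ass(M)$ consists exactly of the minimal primes of $I$. These are monomial primes $\frak p=(x_i: i\in C)$, with $\dim S/\frak p=n-\height \frak p$. By Auslander--Buchsbaum, $\projdim(S/I)=n-\depth(S/I)$. So the claim $\projdim(S/I)=\bight(I)$ is equivalent to
$$\depth(S/I)=\min\{\dim S/\frak p:\ \frak p\in\Ass(S/I)\}=n-\bight(I).$$
I will prove the two inequalities separately.

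\emph{The inequality $\projdim(S/I)\geq \bight(I)$ holds for every radical ideal.} Choose a minimal prime $\frak p$ of $I$ with $\height\frak p=\bight(I)$. Localizing the minimal free resolution of $S/I$ at $\frak p$ gives a free resolution of $(S/I)_{\frak p}=S_{\frak p}/\frak pS_{\frak p}$, the residue field of the regular local ring $S_{\frak p}$. The projective dimension of that residue field is $\dim S_{\frak p}=\height\frak p$. Therefore $\projdim(S/I)\geq\height \frak p=\bight(I)$. Equivalently, one always has $\depth M\leq\dim S/\frak p$ for every $\frak p\in\Ass M$.

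\emph{The inequality $\depth(S/I)\geq n-\bight(I)$ uses the sequentially Cohen--Macaulay hypothesis.} Take the dimension filtration $0=M_0\subset M_1\subset\cdots\subset M_r=M$. Each $M_i/M_{i-1}$ is Cohen--Macaulay and $\dim M_i/M_{i-1}$ is strictly increasing in $i$. Every $\frak p\in\Ass M$ occurs as an associated prime of exactly one quotient, and the dimensions of the nonzero quotients are exactly the numbers $\dim S/\frak p$, $\frak p\in\Ass M$. Hence $\dim M_1$, the least of these dimensions, equals $n-\bight(I)$. I then run induction along the filtration using the depth lemma on the sequences $0\to M_{i-1}\to M_i\to M_i/M_{i-1}\to 0$. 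The inductive claim is $\depth M_i=\dim M_1$. It holds for $i=1$ because $M_1$ is Cohen--Macaulay. For the step, $\depth(M_i/M_{i-1})=\dim(M_i/M_{i-1})>\dim M_1=\depth M_{i-1}$, so the depth lemma gives $\depth M_i\geq\min\{\depth M_{i-1},\depth M_i/M_{i-1}\}=\dim M_1$. Taking $i=r$ gives $\depth M\geq n-\bight(I)$, and with the first inequality we get equality.

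The main obstacle is setting up the dimension filtration correctly. I need $M_1$ to be the largest submodule of $M$ of minimal dimension, and each associated prime of $M$ to appear in exactly one quotient. This is standard (Schenzel, Stanley) and I would cite it rather than reprove it.

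As a cross-check there is an alternative route via Alexander duality. By Herzog--Hibi, $S/I_\Delta$ is sequentially Cohen--Macaulay if and only if $I_{\Delta^\vee}$ is componentwise linear. By Terai, $\projdim(S/I_\Delta)=\reg(I_{\Delta^\vee})$. A componentwise linear ideal has regularity equal to the maximal degree of a minimal generator. The minimal generators of $I_{\Delta^\vee}$ are the products of the variables generating the minimal primes of $I_\Delta$. So this regularity is $\bight(I)$, giving the same conclusion.
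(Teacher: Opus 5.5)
Your proof is correct. The paper gives no argument for this lemma; it only cites Villarreal and Morey--Villarreal. The standard proof behind such citations is the Alexander-duality argument you give as a cross-check: by Herzog--Hibi, $I^\vee$ is componentwise linear, so $\reg(I^\vee)$ is the top degree of a minimal generator, i.e.\ $\bight(I)$, and Terai converts this into $\projdim(S/I)$.

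Your main route is genuinely different and more elementary. Auslander--Buchsbaum reduces the claim to $\depth M=\min\{\dim S/\frak p:\frak p\in\Ass M\}$. The bound $\le$ comes from localizing at a minimal prime of maximal height, and $\ge$ comes from the depth lemma applied along the Cohen--Macaulay filtration. This proves more: for any finitely generated graded sequentially Cohen--Macaulay module, $\projdim M=\max\{\height\frak p:\frak p\in\Ass M\}$. Squarefreeness is used only to identify $\Ass(S/I)$ with the minimal primes. The duality route is tied to squarefree ideals and rests on heavier theorems, but it explains $\bight(I)$ combinatorially as a generator degree of the dual ideal.

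Two small points on the write-up.
\begin{enumerate}
\item In the inductive step you only prove $\depth M_i\ge\dim M_1$, not the equality you announce. That weaker statement is all you use. Equality also follows from your first inequality, since $\Ass M_1\subseteq\Ass M_i$.
\item The step you flag as the ``main obstacle'' can be bypassed. Take any filtration whose quotients are Cohen--Macaulay of strictly increasing dimension. Each quotient is unmixed, and $\Ass M\subseteq\bigcup_i\Ass(M_i/M_{i-1})$. Hence $\dim M_1=\min\{\dim S/\frak p:\frak p\in\Ass M\}$ directly, with no need to identify the filtration with the dimension filtration.
\end{enumerate}
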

	
We can now determine the big height of the closed neighborhood complex of $P_n^2$, and the maximal size of a minimal domination set of $P_n^2$
\begin{Proposition}{\rm{(The big height)}}
Let $n\geq7$ be an integer, and $n=6p+d$ with $0\leq d\leq 5$. Then $$\bight(NI(P_n^2))=\gamma'_{P_n^2}=\begin{cases}
		2p,&d=0\\
		2p+1,&d\in\{1,2,3\}\\
		2p+2,&d\in\{4,5\}
	\end{cases}$$

\end{Proposition}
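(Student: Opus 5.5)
The plan is to obtain the big height by combining three results already proved: sequential Cohen--Macaulayness, the projective dimension formula, and the Sharifan--Moradi translation into domination. First, by Theorem \ref{SCM1}(i), $S/NI(P_n^2)$ is sequentially Cohen--Macaulay for $n\geq 7$. Since $NI(P_n^2)$ is a squarefree monomial ideal, Lemma \ref{SCMbh} then gives
$$\bight(NI(P_n^2))=\projdim\left(\frac{S}{NI(P_n^2)}\right).$$

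Second, I would substitute the value of the projective dimension from Theorem \ref{pdreg}. Writing $n=6p+d$, it equals $2p$ for $d=0$, $2p+1$ for $d\in\{1,2,3\}$, and $2p+2$ for $d\in\{4,5\}$. This is exactly the displayed formula for $\bight(NI(P_n^2))$.

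Third, the equality $\bight(NI(P_n^2))=\gamma'_{P_n^2}$ follows directly from Proposition \ref{domnum}, that is, \cite[Lemma 2.2]{SM}. The underlying reason is that the minimal primes of $NI(G)$ are generated exactly by the minimal dominating sets of $G$: a set of variables meets every closed neighborhood precisely when the corresponding vertices dominate $G$. So the maximal height of a minimal prime is the maximal size of a minimal dominating set.

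The only delicate point I see is checking the hypotheses of Lemma \ref{SCMbh}. This requires sequential Cohen--Macaulayness of $S/I$ for $I=NI(P_n^2)$ itself, which is statement (i) of Theorem \ref{SCM1}, and not of the Stanley--Reisner ring of $\mathcal{NI}[P_n^2]$. I would also confirm that the restriction $n\geq 7$ is consistent with the range in which Theorems \ref{pdreg} and \ref{SCM1} were established. If one wanted independent confirmation, the fallback would be a direct combinatorial check: exhibit, for each residue $d$, a minimal dominating set of $P_n^2$ of the stated size. By the equality above, no larger minimal dominating set can exist.
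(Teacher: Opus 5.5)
Your proposal is correct and follows essentially the same route as the paper. The paper likewise obtains $\bight(NI(P_n^2))=\gamma'_{P_n^2}$ from Proposition~\ref{domnum}, and the explicit formula by combining Theorem~\ref{SCM1}(i), Lemma~\ref{SCMbh} and the projective dimension from Theorem~\ref{pdreg}.
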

\begin{proof} The first equality holds by Proposition \ref{domnum}.
	The formula follows since $S/NI(P_n^2)$ is sequentially Cohen--Macaulay (Theorem \ref{SCM1}(i)) and we use Lemma \ref{SCMbh} and the formulae of projective dimension given in Theorem \ref{pdreg}.
\end{proof}

We can also prove that the complementary ideal of $P_n^2$ has linear quotients.
\begin{Corollary} The ideal $I^c(P_{n}^2)=(x_1\cdots x_{n-3},x_4\cdots x_n)+\left(\frac{x_1\cdots x_n}{x_i\cdots x_{i+4}}:2\leq i\leq n-5\right)$ has linear quotients.
\end{Corollary}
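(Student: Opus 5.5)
The quickest route uses the paper's own machinery. By Proposition \ref{shellable}, $\Delta=\mathcal{NI}[P_n^2]$ is shellable. By Theorem \ref{shelllinquot}(ii), the Stanley--Reisner ideal $I_{\Delta^{\vee}}$ then has linear quotients. So the plan is to identify $I_{\Delta^{\vee}}$ with $I^c(P_n^2)$ and to check that the displayed generators are the complements of the facets $F_1,\ldots,F_{n-4}$.

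\textbf{Identifying the ideal.} A set $G$ is a non-face of $\Delta^{\vee}$ exactly when its complement $G^c$ is a face of $\Delta$. Hence the minimal non-faces of $\Delta^{\vee}$ are the complements of the facets of $\Delta$, and
$$I_{\Delta^{\vee}}=\left(\mathbf{x}_{F^c}: F\in\mathcal{F}(\Delta)\right)=I^c(\Delta).$$
For the facets of $\mathcal{NI}[P_n^2]$:
\begin{itemize}
\item $F_1^c$ gives the generator $x_4\cdots x_n$;
\item $F_{n-4}^c$ gives the generator $x_1\cdots x_{n-3}$;
\item for $2\leq i\leq n-5$, $F_i^c$ gives the generator $\frac{x_1\cdots x_n}{x_i\cdots x_{i+4}}$.
\end{itemize}
This is exactly the ideal in the statement, so linear quotients follow.

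\textbf{Explicit check (optional).} One can also verify the quotients directly, using the order $F_2,\ldots,F_{n-5},F_1,F_{n-4}$ from Proposition \ref{shellable}. Write $u_i=\mathbf{x}_{F_i^c}$.
\begin{itemize}
\item For $2\leq i<j\leq n-5$ we have $u_i:u_j=\mathbf{x}_{F_j\setminus F_i}$. This is divisible by $x_{j+4}$, and $u_{j-1}:u_j=x_{j+4}$. So $(u_2,\ldots,u_{j-1}):u_j$ is generated by variables, namely $(x_{j+4})$ when $j\geq3$.
\item For $u_1$: the colon with each earlier $u_i$ is divisible by $x_1$, and $u_2:u_1=x_1$. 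So the quotient is $(x_1)$.
\item For $u_{n-4}$: the colon with each earlier generator is divisible by $x_n$, and $u_{n-5}:u_{n-4}=x_n$. So the quotient is $(x_n)$.
\end{itemize}

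\textbf{Main obstacle.} The only genuine point is the bookkeeping in the identification step: the statement lists $x_1\cdots x_{n-3}$ first, but it is $F_{n-4}^c$, not $F_1^c$. Once that is sorted out, nothing else is difficult.
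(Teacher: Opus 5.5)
Your proof is correct and follows the paper's argument: shellability of $\mathcal{NI}[P_n^2]$ (Proposition \ref{shellable}) combined with Theorem \ref{shelllinquot}(ii), once $I_{\Delta^{\vee}}$ is identified with the ideal generated by the complements of the facets. The paper cites \cite[Lemma 1.2]{HHZ} for that identification, whereas you derive it directly from the definition of the Alexander dual; your optional computation of the colon ideals along the order $F_2,\ldots,F_{n-5},F_1,F_{n-4}$ is also correct and makes the result self-contained.
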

\begin{proof} By using \cite[Lemma 1.2]{HHZ}, we get $I_{\mathcal{NI}[P_n^2]^\vee}=I(\mathcal{NI}[P_n^2]^c)=I^c(P_{n}^2)$ and since $\mathcal{NI}[P_n^2]$ is shellable, the statement follows by Theorem \ref{shelllinquot}(ii).
	\end{proof}

\begin{Proposition}\label{fvector} {\rm{(The $f$-vector)}}
	The $f$-vector of $\mathcal{NI}[P_n^2]$ is $$f(\mathcal{NI}[P_n^2])=(1,n,4n-14,6n-30,4n-23,n-6).$$
\end{Proposition}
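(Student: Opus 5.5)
The faces of $\mathcal{NI}[P_n^2]$ (for $n\geq 7$) are exactly the subsets of the facets $F_1=\{x_1,x_2,x_3\}$, $F_{n-4}=\{x_{n-2},x_{n-1},x_n\}$ and the windows $F_i=\{x_i,\ldots,x_{i+4}\}$, $2\leq i\leq n-5$. Since $F_1\subseteq\{x_1,\ldots,x_5\}$ and $F_{n-4}\subseteq\{x_{n-4},\ldots,x_n\}$, every face of $\mathcal{NI}[P_n^2]$ lies inside some block of five consecutive vertices. The plan is to count faces by size. For each nonempty face $G$, we record its minimal index $a$ and maximal index $b$ and note the span $b-a\leq 4$. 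We then decide, for each span and each position, whether $G$ is actually a face, and count the number of ways to choose the interior vertices.

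First, a set $G$ with $\min G=a$, $\max G=b$ and $b-a\leq 4$ is a face iff $\{x_a,\ldots,x_b\}\subseteq F_i$ for some facet. An interval of length at most $5$ lies in some window $F_i$ with $2\leq i\leq n-5$ iff it lies in $\{x_2,\ldots,x_{n-1}\}$. Intervals containing $x_1$ are faces iff they lie in $\{x_1,x_2,x_3\}$, and those containing $x_n$ are faces iff they lie in $\{x_{n-2},x_{n-1},x_n\}$. The condition $n\geq 7$ guarantees that these two end regions do not interact. Therefore, for each span $s=b-a$, the number $N_s$ of admissible intervals is as follows:
\[
N_0=n,\quad N_1=n-1,\quad N_2=n-2,\quad N_3=n-5,\quad N_4=n-6 .
\]
For $s\leq 2$ every interval of length at most $3$ is a face. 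For $s=3$ the three intervals touching $x_1$ or $x_n$ are excluded. For $s=4$ we count the windows $i=2,\ldots,n-5$.

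Next, a face of size $k\geq 2$ with span $s$ is obtained by choosing $k-2$ interior vertices among the $s-1$ vertices strictly between $a$ and $b$, giving $\binom{s-1}{k-2}$ choices. Hence
\[
f_{k-1}=\sum_{s=1}^{4}N_s\binom{s-1}{k-2}\ (k\geq2),\qquad f_0=n,\quad f_{-1}=1.
\]
Evaluating this sum for each $k$ gives the stated entries:
\begin{itemize}
\item[] $f_1=(n-1)+(n-2)+(n-5)+(n-6)=4n-14$;
\item[] $f_2=(n-2)+2(n-5)+3(n-6)=6n-30$;
\item[] $f_3=(n-5)+3(n-6)=4n-23$;
\item[] $f_4=n-6$.
\end{itemize}

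The main obstacle is the characterization of faces by their endpoint interval, that is, verifying the values of $N_3$ and $N_4$. The key point is that a set is contained in a facet iff its convex hull $\{x_a,\ldots,x_b\}$ is. This holds because every facet is an interval. The boundary exclusions also need care: for example, $\{x_1,x_4\}$ and $\{x_{n-3},x_n\}$ are not faces, while $\{x_2,x_5\}$ is. As a sanity check we can run $n=7$, where the facets are $F_1$, $F_2$, $F_3$, and the formula gives $(1,7,14,12,5,1)$, which can be confirmed directly.
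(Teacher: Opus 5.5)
Your proof is correct, but you organize the count differently from the paper.

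The paper counts incrementally along the facet order $F_2,F_3,\ldots,F_{n-5}$, followed by the two end facets $F_1,F_{n-4}$:
\begin{itemize}
\item $F_2$ contributes all $\binom{5}{k}$ of its $k$-subsets;
\item each later window $F_j$ ($3\le j\le n-5$) contributes only the $\binom{4}{k-1}$ subsets containing its new vertex $x_{j+4}$;
\item $F_1$ and $F_{n-4}$ each contribute the $\binom{2}{k-1}$ subsets containing $x_1$, resp.\ $x_n$.
\end{itemize}
This amounts to $f_{k-1}=\binom{5}{k}+(n-7)\binom{4}{k-1}+2\binom{2}{k-1}$, although the paper writes it out separately for each $f_i$.

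You instead classify faces by their endpoints. You use the fact that every facet is an interval, so a set is a face if and only if its hull is. This gives the uniform formula $f_{k-1}=\sum_{s}N_s\binom{s-1}{k-2}$.

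The paper's version is shorter and mirrors its shelling order $F_2,\ldots,F_{n-5},F_1,F_{n-4}$: the new faces of each facet are exactly those containing one distinguished vertex. Why nothing else is new at each step is left largely implicit there. Your version makes the face criterion explicit and isolates all boundary effects in the two numbers $N_3,N_4$. It would also carry over directly to other complexes whose facets are intervals.

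One small slip: for $s=3$ there are $n-3$ intervals, and exactly \emph{two} of them are excluded, not three. These are $\{x_1,\ldots,x_4\}$ and $\{x_{n-3},\ldots,x_n\}$. Your value $N_3=n-5$ is nonetheless correct.
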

\begin{proof}
	The statement follows by direct computations.
We have	$f_{-1}=1$ and $f_0=|V(\mathcal{NI}[P_n^2])|=n$. Since $f_1$ is the number of edges, we have 10 edges in $F_2$, 2 edges in $F_1$, we removed $\{x_2,x_3\}$. Each of the facets $F_3,\ldots,F_{n-5}$ contribute with $4$ edges each of them (since each edge contains only one new vertex which is connected to each one of the previous $4$ vertices). Finally $F_{n-4}$ contributes with only 2 edges. Hence $$f_1=2+10+4(n-7)+2=4n-14.$$ 

For $f_2$, by using the same technique and taking into account that $F_1$ and $F_{n-4}$ are of dimension 2, we get: $$f_2=2+{5\choose3}+{4\choose 2}\cdot(n-7)=2+10+6(n-7)=6n-30.$$

In the same way, we get $$f_3={5\choose4}+(n-7)\cdot{4\choose3}=4n-23$$and$$f_4=n-6.$$
\end{proof}
We can now easily compute the $h$-vector:
\begin{Corollary}\label{hvect} {\rm{(The $h$-vector)}}
		The $h$-vector of $\mathcal{NI}[P_n^2]$ is $(1,n-5,-4,2,0,0)$.
\end{Corollary}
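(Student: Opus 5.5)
The plan is to read the $h$-vector off the $f$-vector of Proposition \ref{fvector} using the standard transformation between the two. Since the largest facets $F_2,\ldots,F_{n-5}$ have five vertices, $\dim \mathcal{NI}[P_n^2]=4$. We therefore set $d=\dim+1=5$ and use the defining identity
$$\sum_{i=0}^{d} f_{i-1}(t-1)^{d-i}=\sum_{k=0}^{d} h_k t^{d-k},$$
which is equivalent to
$$h_k=\sum_{i=0}^{k}(-1)^{k-i}\binom{d-i}{k-i}f_{i-1}, \qquad 0\leq k\leq 5.$$
We do this for the complex as given, even though it is non-pure. The statement concerns the formal $h$-vector attached to $f(\mathcal{NI}[P_n^2])$, so no purity is needed for the transformation itself.

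The key steps, in order, are the following. Substitute $f_{-1}=1$, $f_0=n$, $f_1=4n-14$, $f_2=6n-30$, $f_3=4n-23$, $f_4=n-6$, and compute each $h_k$ as a polynomial in $n$, checking that all the $n$-terms cancel for $k\geq 2$.

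\begin{itemize}
\item $h_0=1$ and $h_1=f_0-\binom{5}{1}=n-5$.
\item $h_2=\binom{5}{2}-\binom{4}{1}n+f_1=10-4n+4n-14=-4$.
\item $h_3=-\binom{5}{3}+\binom{4}{2}n-\binom{3}{1}f_1+f_2=-10+6n-12n+42+6n-30=2$.
\item $h_4=\binom{5}{4}-\binom{4}{3}n+\binom{3}{2}f_1-\binom{2}{1}f_2+f_3=5-4n+12n-42-12n+60+4n-23=0$.
\item $h_5=-1+n-f_1+f_2-f_3+f_4=0$.
\end{itemize}

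As a consistency check, $h_5=(-1)^{d-1}\overline{\chi}(\mathcal{NI}[P_n^2])$, so this computation also shows that the reduced Euler characteristic vanishes. This is plausible because the complex is a chain of overlapping simplices and hence contractible.

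The only real obstacle is bookkeeping. We must fix the correct $d$ (namely $5$, not the size $3$ of the small facets $F_1,F_{n-4}$) and keep the signs and binomial coefficients straight. All the substantive content is already in Proposition \ref{fvector}. If the cancellations fail, the first thing to re-examine is the counts $f_1,f_2,f_3$ in that proposition, for instance by testing the case $n=7$ directly.
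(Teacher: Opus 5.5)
Your proposal is correct and is the paper's proof: substitute the $f$-vector of Proposition \ref{fvector} into $h_i=\sum_{j=0}^{i}(-1)^{i-j}\binom{d-j}{i-j}f_{j-1}$ with $d=5$. Your explicit values $h_0,\ldots,h_5$, which the paper leaves to the reader, are all right, and the $f$-vector you rely on also checks out: count subsets of $\{x_2,\ldots,x_{n-1}\}$ of span at most $4$, then add the three faces through $x_1$ and the three through $x_n$ of dimension $0$, $1$, $2$.
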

\begin{proof}
	For the proof one may just apply the formulae which relate the $f$-vector and the $h$-vector, namely
	$$h_i(\Delta)=\sum\limits_{j=0}^i(-1)^{i-j}{{d-j}\choose {i-j}}f_{j-1}(\Delta),$$for all $i=0,\ldots,d$ (here $\dim(\Delta)=d-1$), taking into account that in our case $d=5$ and use Proposition \ref{fvector}.
\end{proof}
The above results allow us to compute the (reduced) Euler characteristic and the multiplicity. 
\begin{Proposition}
	For $n\geq7$ we have $\chi(\mathcal{NI}[P_n^2])=1$, $\overline{\chi}(\mathcal{NI}[P_n^2])=0$ and $e(S/NI(P_n^2))=n-6$.
\end{Proposition}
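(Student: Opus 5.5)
The plan is to read all three invariants off the $f$-vector of Proposition \ref{fvector} and the $h$-vector of Corollary \ref{hvect}, with one interpretive caveat about the multiplicity. By definition, $\chi(\mathcal{NI}[P_n^2])=\sum_{i\geq 0}(-1)^if_i$, and the dimension is $4$. So I will substitute $f_0=n$, $f_1=4n-14$, $f_2=6n-30$, $f_3=4n-23$, $f_4=n-6$. The coefficients of $n$ cancel, since $1-4+6-4+1=0$. The constants give $0+14-30+23-6=1$, so $\chi=1$. The reduced Euler characteristic then follows from the paper's definition $\overline{\chi}=-1+\chi$, which gives $0$.

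As a cross-check independent of the counting in Proposition \ref{fvector}, I would note that $\mathcal{NI}[P_n^2]$ is contractible. In the shelling order of Proposition \ref{shellable}, each new facet meets the union of the previous ones in a single nonempty simplex: $F_j\cap(F_2\cup\cdots\cup F_{j-1})=F_j\cap F_{j-1}$, and similarly for $F_1$ and $F_{n-4}$. Gluing a simplex along a face preserves homotopy type, so the complex is a contractible union of simplices. Hence $\overline{\chi}=0$ is forced. This is consistent with the $f$-vector count, and could serve as an alternative proof if the $f$-vector is doubted.

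For the multiplicity, I will use the standard fact that for a $(d-1)$-dimensional complex $\Delta$ the multiplicity of $k[\Delta]$ equals $\sum_i h_i(\Delta)=f_{d-1}(\Delta)$, the number of facets of maximal dimension. With Corollary \ref{hvect} this is $1+(n-5)-4+2+0+0=n-6$. It agrees with $f_4=n-6$, which is the number of $4$-dimensional facets $F_2,\ldots,F_{n-5}$.

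The one point requiring care, and the main obstacle, is identifying which ring $e(S/NI(P_n^2))$ refers to. The $h$-vector computes the Hilbert series of the Stanley--Reisner ring $k[\mathcal{NI}[P_n^2]]$, not of the quotient by the facet ideal $NI(P_n^2)$. For $S/NI(P_n^2)$ itself, the associativity formula would instead count minimal primes of height $\gamma(P_n^2)$, i.e. minimum dominating sets. I would therefore state explicitly that $e$ is taken in the sense coming from $h$-vector of $\mathcal{NI}[P_n^2]$, namely $\sum h_i$. Under that reading the computation above gives $n-6$ directly.
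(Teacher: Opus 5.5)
Your computation of $\chi$ and $\overline{\chi}$ is the paper's own: substitute the $f$-vector of Proposition~\ref{fvector} and use $\overline{\chi}=\chi-1$. The contractibility check is a genuinely independent argument that the paper does not give, and it is correct. In the shelling order of Proposition~\ref{shellable}:
\begin{itemize}
\item each $F_j$ with $3\le j\le n-5$ is attached along the simplex on $F_j\cap F_{j-1}$, since $F_j\cap F_i\subseteq F_j\cap F_{j-1}$ for $i<j$;
\item $F_1$ is attached along the simplex on $\{x_2,x_3\}$;
\item $F_{n-4}$ is attached along the simplex on $\{x_{n-2},x_{n-1}\}$; it misses $F_1$ because $n\ge 7$.
\end{itemize}
So the complex is contractible. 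This gives $\overline{\chi}=0$, and in fact the vanishing of all reduced homology, without relying on the face count. The paper's route stands or falls with Proposition~\ref{fvector}.

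Your caveat on the multiplicity is more than a matter of interpretation: the claim $e(S/NI(P_n^2))=n-6$ is false as written. For $n=7$, $NI(P_7^2)=(x_1x_2x_3,\,x_2x_3x_4x_5x_6,\,x_5x_6x_7)$ has height $2$. Its minimal primes of height $2$ are the eight ideals $(x_a,x_b)$ with $a\in\{1,2,3\}$, $b\in\{5,6,7\}$ and $(a,b)\neq(1,7)$. Hence $e(S/NI(P_7^2))=8\neq 1$, which the Hilbert series numerator $1-2t^3-t^5+3t^6-t^7$ confirms. For $n=10$ the only minimum dominating set is $\{x_3,x_8\}$, so $e=1\neq 4$.

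The paper's proof sums the $h$-vector of Corollary~\ref{hvect}. That sum is the multiplicity of the Stanley--Reisner ring $k[\mathcal{NI}[P_n^2]]=S/I_{\mathcal{NI}[P_n^2]}$, not of $S/I(\mathcal{NI}[P_n^2])=S/NI(P_n^2)$. This is exactly the conflation you spotted. The number $n-6=f_4$ just counts the $4$-dimensional facets, i.e. the degree-$5$ generators of $NI(P_n^2)$, and has nothing to do with its minimal primes.

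So your argument, like the paper's, proves $e(k[\mathcal{NI}[P_n^2]])=n-6$. You should present this as a correction of the statement rather than as a convention for reading $e$. The literal claim about $S/NI(P_n^2)$ cannot be proved, because it fails already at $n=7$.
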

\begin{proof}
	Since $\chi(\mathcal{NI}[P_n^2])=f_0-f_1+f_2-f_3+f_4$, by using Proposition \ref{fvector} we have $$\chi(\mathcal{NI}[P_n^2])=n-(4n-14)+(6n-30)-(4n-23)+(n-6)=1.$$ Now $\overline{\chi}(\mathcal{N}I[P_n^2])=\chi(\mathcal{NI}[P_n^2])-1=0$. For the multiplicity, the formula follows easy by using Corollary \ref{hvect}:
	$e(S/NI(P_n^2)=\sum\limits_{i=0}^dh_i=1+(n-5)-4+2=n-6$.
\end{proof}

\end{document}